\def\e{\boldsymbol{e}}
\def\x{\boldsymbol{x}}
\def\y{\boldsymbol{y}}
\def\z{\boldsymbol{z}}
\def\f{\boldsymbol{f}}
\def\r{\boldsymbol{r}}
\def\v{\boldsymbol{v}}
\def\h{\boldsymbol{h}}
\def\n{\boldsymbol{n}}
\def\u{\boldsymbol{u}}
\def\z{\boldsymbol{z}}
\def\1{\boldsymbol{1}}
\begin{document}

\title{Controllability of Keplerian Motion with Low-Thrust Control Systems\thanks{This work was partially supported by a public grant overseen by the French National Research Agency (ANR) as part of the $\ll$ Investissement d'Avenir $\gg$ program, through the ``iCODE Institute Project'' funded by IDEX Paris-Saclay, ANR-11-IDEX-0003-02.}}

\author{
Zheng Chen\thanks{D\'epartement de Math\'ematiques \& CNRS, University of Paris-Sud, Bat. 425, Facult\'e des Sciences d'Orsay, F-91405, Orsay CEDEX, France, \underline{zheng.chen@math.u-psud.fr.}}
\ and
 Yacine Chitour\thanks{Laboratoire des Signaux et Syst\`emes, Universit\'e Paris-Sud, CNRS, and CentraleSupelec, Gif-sur-Yvette, France. \underline{yacine.chitour@lss.supelec.fr.}},
}

\maketitle{}
\newtheorem{property}{Property}
\newtheorem{definition}{Definition}
\newtheorem{proposition}{Proposition}
\newtheorem{problem}{Problem}
\newtheorem{remark}{Remark}
\newtheorem{assumption}{Assumption}
\newtheorem{hypothesis}{Hypothesis}
\newtheorem{conjecture}{Conjecture}
\newtheorem{theorem}{Theorem}
\newtheorem{corollary}{Corollary}
\newtheorem{lemma}{Lemma}

\begin{abstract}
In this paper, we present the controllability properties of Keplerian motion controlled by low-thrust control systems. The low-thrust control system, compared with high or even impulsive control system, provide a fuel-efficient means to control the Keplerian motion of a satellite in restricted two-body problem. We obtain that, for any positive value of maximum thrust, the motion is controllable for orbital transfer problems. For two other typical problems: de-orbit problem and orbital insertion problem, which have state constraints, the motion is controllable if and only if the maximum thrust is bigger than a limiting value. Finally, two numerical examples are given to show the numerical method to compute the limiting value.
\end{abstract}







   \tableofcontents

\section{Introduction}

In classical mechanics, the determination of the motion of two celestial bodies, which interact only with each other, is the typical two-body problem. If one body is light enough, the uncontrolled motion of the light body around a heavy body is a restricted two-body problem, and the motion is well-known as Keplerian motion. A common example is the artificial bodies, i.e., spacecrafts or satellites, moving around the Earth. Once the atmospheric effects are negligible and the Earth overwhelmingly dominates the gravitational influence, a satellite moves stably on a periodic orbit if the mechanical energy of the satellite is negative. As an increasing number of artificial satellites have been launched into space around the Earth or even into deeper space since the mid of last century, an important problem arises in astronautics, that is to control the Keplerian motion of a satellite to transfer between different orbits to achieve desired mission requirements.

The control of an artificial satellite is generally performed by system propulsion,
expelling mass in a high speed to generate an opposite reaction force according to
Newton's third law of motion. Up to now, there are already several types of propulsion
systems available, including chemical propulsion systems and electric propulsion
systems. Though chemical propulsion systems are able to provide much higher thrust,
electric propulsion systems have the the potential for a much higher specific impulse
than is available from chemical ones, resulting in a lower fuel consumption and thus a
longer satellite lifetime for a given propellant mass. On the one hand, the electric
propulsion systems provide a fuel-efficient means to control the motion of a
satellite; On the other hand, the fact that the possible maximum thrust, which the
electric propulsion systems can provide, is very low results that the transfer time is
exponentially long. Hence, the optimization of transfer time has be studied in
Refs.\ \cite{Bonnard:06,Bonnard:05,Caillau:01}. In addition, the strong-local optimality of such problems was studied in Ref.\ \cite{Caillau:15}.

Though the low-thrust control systems provide an fuel-efficient means to control the
motion of a satellite, the problem that whether or not it has the ability to move a
satellite from one point to another one arises. This is actually a controllability
property, which is a prerequisite to analyze  mission feasibility during designing a
space mission or designing an optimal trajectory. Restricting the mechanical energy of
a satellite into negative region without any other state constraints, the controlled
motion is called orbital transfer problem (OTP), and the controllability for OTP was
derived in Ref.\ \cite{Bonnard:05,Caillau:01} to show that there exists admissible
controlled trajectories for every OTP if the maximum thrust is positive. In the
current paper, the controllability for OTP is established using alternative techniques from geometric control, (cf. Refs.\ \cite{Sontag:98,Jean:14}). Taking into account the state constraint that the radius of a satellite is larger than the radius of the surface of the atmosphere around the Earth, the orbital insertion problem (OIP) and de-orbit problem (DOP) are defined in this paper. Some controllability properties for OIP and DOP are  then addressed and we show that there exist admissible controlled trajectories for OIPs and DOPs if and only if the maximum thrust is bigger than a specific value (depending on the initial point or final point).

The organization of the paper is the following. In Section \ref{SE:Notation}, we recall the basic properties of the dynamics of the motion of a satellite around the Earth, and basic notations and definitions are given which are crucial for analysis of controllability properties. In Section \ref{SE:Controllability}, the controllability for OTPs is represented firstly by using geometric control technology in Ref.\ \cite{Jean:14}. Then, the controllability properties for OIPs and DOPs are derived in Subsections \ref{SE:P-_P+} and \ref{SE:P+_P-}, respectively. In Section \ref{SE:Numerical}, two numerical examples are given to show the development in this paper. Finally, a conclusion is given in Section \ref{SE:Conclusion}.

\section{Notations and definitions}\label{SE:Notation}

\subsection{Dynamics}

Consider a satellite as a mass point moving around the Earth, its state in a geocentric inertial cartesian coordinate (GICC), illustrated by Figure \ref{Fig:ECICS}, consists of its position vector $\r \in \mathbb{R}^3\backslash\{0\}$, velocity vector $\v \in \mathbb{R}^3$, and mass $m\in\mathbb{R}^*_+$. Then, the dynamics for the movement of the satellite for positive times can be written as:
\begin{eqnarray}
\Sigma_{\text{sat}}:
\begin{cases}
\dot{\r}(t) = \v(t),\\
\dot{\v}(t) = -\frac{\mu}{\parallel \r(t) \parallel^3}\r(t) + \frac{\boldsymbol{\tau}(t)}{m(t)},\\
\dot{m}(t) = -\beta \parallel \boldsymbol{\tau}(t) \parallel,
\end{cases}
\label{EQ:Sigma}
\end{eqnarray}
where  $\mu > 0$ is the gravitational constant, $\beta > 0$ is a scalar constant determined by the specific impulse of the low-thrust control system equipped on the satellite, $\parallel \cdot \parallel$ denotes the Euclidean norm and the thrust (or control) vector $\boldsymbol{\tau}\in\mathbb{R}^3$ takes values in the admissible set
\begin{equation}
\mathcal{T}(\tau_{\mathrm{max}} )=\big\{\boldsymbol{\tau} \in \mathbb{R}^3\ \arrowvert\  \parallel \boldsymbol{\tau} \parallel \leq \tau_{\mathrm{max}}   \big\},
\label{EQ:thrust_admissible}
\end{equation}
where $\tau_{\mathrm{max}}$ is a positive constant. 
\begin{figure}[!h]
 \centering\includegraphics[width=1.5in]{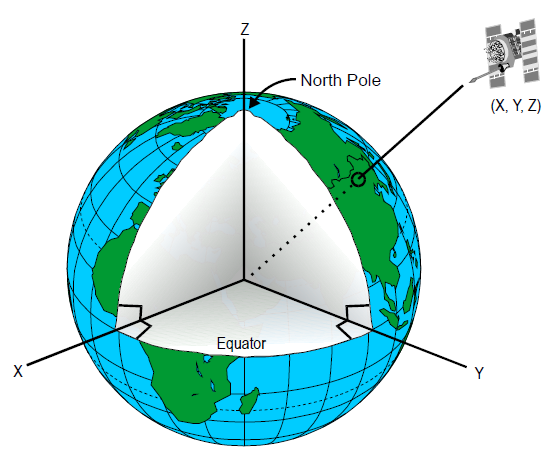}
 \caption[]{Geocentric inertial cartesian coordinate.}
 \label{Fig:ECICS}
\end{figure}

If $\mathcal{X}=\mathbb{R}^3\backslash\{0\}\times\mathbb{R}^3$ and $\x = (\r,\v)$, we define two vector fields $\f_0,\f_1$ on $\mathcal{X}$ by 
 \begin{eqnarray}
&&\f_0: \mathcal{X}\rightarrow \mathbb{R}^6,\ \f_0(\x) = \left(
\begin{array}{c}
\v\\
-\frac{\mu}{\parallel \r \parallel^3}\r
\end{array}\right),\\
&&\f_1: \mathcal{X}\rightarrow \mathbb{R}^{6\times 3},\ \f_1(\x) = \left(
\begin{array}{c}
\boldsymbol{0}\\
I_3
\end{array}\right),
\end{eqnarray}
with $\mathbb{R}^{6\times 3}$ denotes the set of $6\times 3$ matrices wih real entries and $I_3$  is the identity matrix of $\mathbb{R}^3$. 
Let $\mathcal{B}_\varepsilon$ be the closed ball in
$\mathbb{R}^3$ centered at the origin and of radius $\varepsilon>0$. For every $\varepsilon > 0$, we consider the control-affine system $\Sigma_{\varepsilon}$ given by 
\begin{eqnarray}
\Sigma_{\varepsilon}: \dot{\x}(t) = \f_0(\x(t)) + \f_1(\x(t)) \u(t),
\label{EQ:Sigma_varepsilon}
\end{eqnarray}
where the control vector $\u\in\mathbb{R}^3$ takes values in $\mathcal{B}_\varepsilon$.
We will use in this paper the vector field point of view of Refs.\ \cite{Sussmann:98,Sussmann:00,Jurdjevic:97}. For every point $\x\in\mathcal{X}$ and every $\u\in \mathcal{B}_{\varepsilon}$, we denote by
\begin{eqnarray}
\f:\mathcal{X}\times\mathcal{B}_{\varepsilon} \rightarrow T_{\x}\mathcal{X},\ (\x,\u)\mapsto \f(\x,\u)= \f_0(\x) + \f_1(\x)\u,
\end{eqnarray}
where $\f_0$ and $\f_1$ are referred to as the {\it drift vector field} and the {\it control vector field}, respectively. Note that trajectories of $\Sigma_{\varepsilon}$ starting at any $\x_0\in\mathcal{X}$
and measurable $\u:\mathbb{R}_+\rightarrow \mathcal{B}_\varepsilon$ are well-defined on an open interval of $\mathbb{R}_+$ containing $0$, which depends in general on $\x_0$ and $\u(\cdot)$.

\subsection{Study of the drift vector field  in $\mathcal{X}$}
In this paragraph, we recall the main properties of the drift vector field $\f_0$.
For every $\x\in\mathcal{X}$, we use $\gamma_{\x}$ to denote the restriction to 
$\mathbb{R}_+$ of the maximal trajectory of $\f_0$ starting at $\x$, i.e. $\gamma_{\x}$ is defined on some interval $[0,t_f(\x))$ where $t_f(\x)\leq \infty$. Then the follwing holds true.

\begin{property}[First integrals \cite{Bismut:11,Curtis:05}]
For every $\x\in\mathcal{X}$, if $\gamma_{\x}(t)=(\tilde{\r}(t),\tilde{\v}(t))$ on $[0,t_f(\x))$, the quantities 
\begin{eqnarray}
{\h}(t) &=& \tilde{\r}(t)\times\tilde{\v}(t),\label{EQ:angular_momentum}
\end{eqnarray}
\begin{eqnarray}
{\boldsymbol{L}}(t) &=& {\tilde{\v}(t)\times{\h}}- {\mu} \frac{\tilde{\r}(t)}{\parallel \tilde{\r}(t)\parallel},\label{EQ:Laplace_integral}
\end{eqnarray}
\begin{eqnarray}
E(t) &=& \frac{\parallel \tilde\v(t) \parallel^2 }{2} - \frac{\mu}{\parallel \tilde\r(t) \parallel},
\label{EQ:energy}
\end{eqnarray}
are constant along $\gamma_{\x}$ and the corresponding constant values are the angular momentum vector ${\h}\in\mathbb{R}^3$, the Laplace vector ${\boldsymbol{L}}\in\mathbb{R}^3$  and the mechanical energy of a unit mass $E\in\mathbb{R}$, which is the sum of the relative kinetic energy $\parallel \tilde{\v}(t)\parallel^2/2$ and the potential energy $-\mu/\parallel \tilde{\r}(t)\parallel$.
\label{DE:First_integrals}
\end{property}

\noindent As a consequence of Eq.(\ref{EQ:angular_momentum}) and Eq.(\ref{EQ:Laplace_integral}), we have the following two properties.
\begin{property}[Straight line \cite{Bismut:11,Curtis:05}]
Let $\x\in\mathcal{X}$ with $\h=0$, i.e. $\r$ and $\v$ are colinear. Then the trajectory $\gamma_{\x}$ is a straight line and $t_f(\x)$ is either finite or infinite.
\end{property}
\begin{property}[Conic section \cite{Bismut:11,Curtis:05}]
Let $\x\in\mathcal{X}$ with $\h \neq 0$ i.e. $\r$ and $\v$ are not colinear. Then, $t_f(\x)=\mathbb{R}^+$ and the trajectory $\gamma_{\x}$ is a periodic trajectory with locus defining a conic section lying in a two-dimensional plane perpendicular to $\h$ called the orbital plane.
\end{property}

\noindent Let
\begin{eqnarray}
\tilde{\mathcal{X}} = \{(\r,\v)\in\mathcal{X}\ \arrowvert\  \r\times \v=\h \neq 0\}.
\end{eqnarray}
Define on $\mathcal{X}$ the function $e:\x\mapsto  \parallel \boldsymbol{L}\parallel /\mu$. Along every trajectory of $\f_0$ starting at $\x\in\tilde{\mathcal{X}}$, one gets, after multiplying Eq.(\ref{EQ:Laplace_integral}) by $\tilde{\r}(t)$, that 
\begin{eqnarray}
\parallel \tilde{\r}(t) \parallel = \frac{ \parallel \h\parallel^2}{\mu(1 + e(\x) \cos\theta(t,\x))},
\label{EQ:r}
\end{eqnarray}
where the angle $\theta(t,\x)$ is defined by $\cos\theta(t,\x)=\frac{\boldsymbol{L}^T\cdot \tilde{\v}(t)}{\parallel \tilde{\r}(t)\parallel \parallel \boldsymbol{L}\parallel}$. Note that the previous formula holds true if $ \boldsymbol{L}=0$ since in that case  $e(\x) \cos\theta(t,\x)$ is equal to zero and the orbit is a circle. 


\begin{figure}[!h]
\centering \includegraphics[trim=3.5cm 12.7cm 3.5cm 5cm, clip=true, width=3.5in, angle=0]{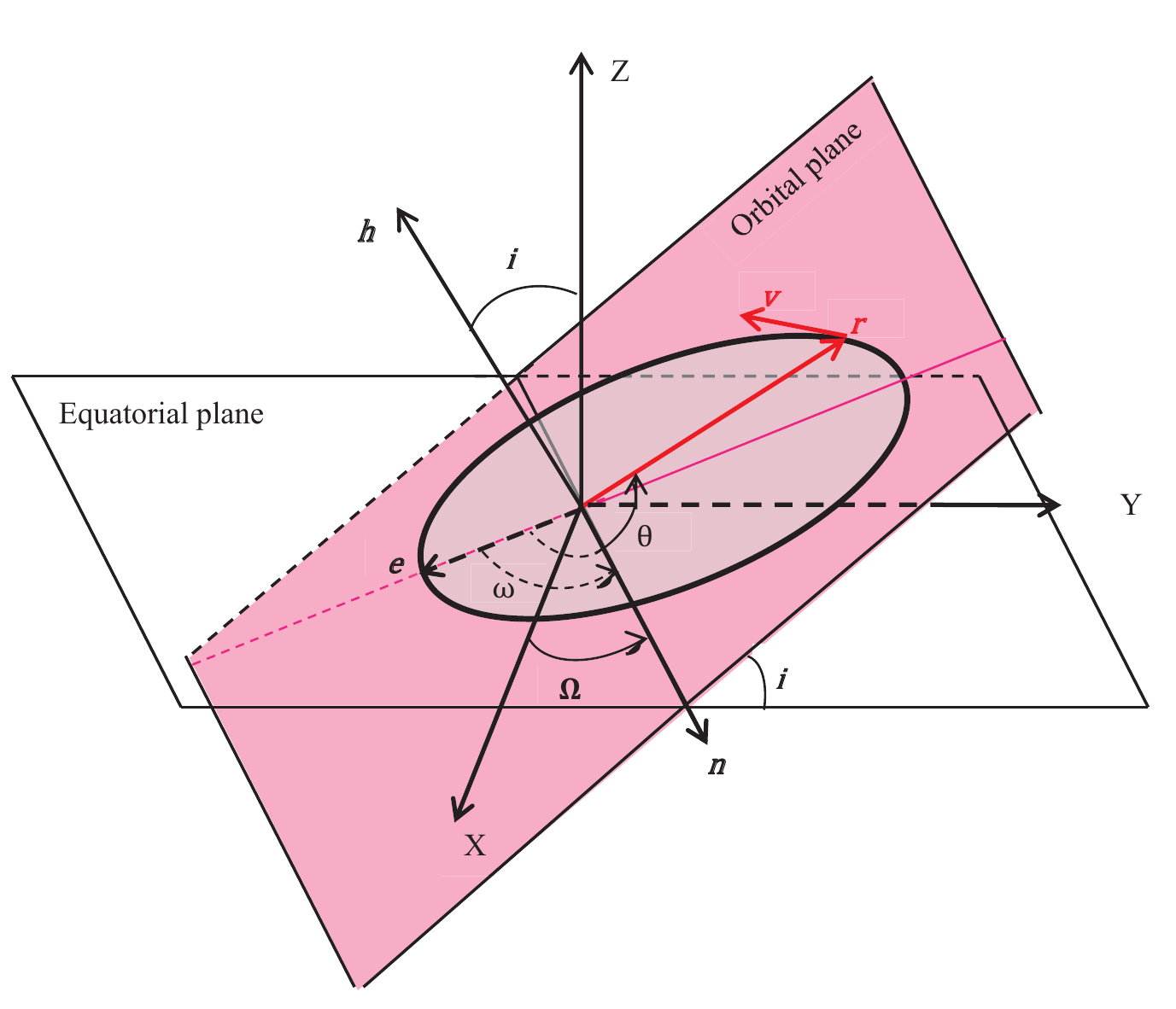}
\caption{The orientation of a 2-dimensional orbital plane in GICC and the geometric shape and orientation of an elliptic orbit on the orbital plane.}
\label{Fig:orbparam}
\end{figure}

Notice from Eq.(\ref{EQ:r}) that an orbit $(\tilde{\r}(t),\tilde{\v}(t))=\gamma_{\x}(t)$ with $\x\in\tilde{\mathcal{X}}$ on $\mathbb{R}^+$ is a parabola if $e(\x) = 1$ and a hyperbola if $e(\x)> 1$. Put a satellite on a parabolic or a hyperbolic orbit without no control, then it can escape to infinity
$\underset{t\rightarrow +\infty}{\text{lim}} \tilde{\r}(t) = +\infty$. Thus, parabolic and hyperbolic orbits are generally used for a satellite to escape from the gravitational attraction of Earth. 
For every point $\x\in\tilde{\mathcal{X}}$, if $0 \leq e(\x) < 1$, the orbit $\gamma_{\x}(t)$ on $\mathbb{R}^+$ is an ellipse, whose orientation is illustrated in Figure \ref{Fig:orbparam}. 
Moreover, it is easy to deduce the following characterization of elliptic orbits. 

\begin{property}
Given every point $\x\in\tilde{\mathcal{X}}$, the mechanical energy $E$ is negative if and only if  $e(\x) < 1$.
\end{property}
\noindent Thus, let us define the set
\begin{eqnarray}
\mathcal{P} = \{\x\in \tilde{\mathcal{X}}\ \arrowvert\ E<0 \},
\end{eqnarray}
then for every point $(\r,\v)\in\mathcal{P}$, the associated orbit $\gamma_{\x}$ on $\mathbb{R}^+$ is periodic and the set $\mathcal{P}$ is called the periodic region in $\mathcal{X}$.

\begin{definition}[Smallest period $t_p$]
Given every point $\x\in\mathcal{P}$, we denote by
 $$t_p:\mathcal{P}\rightarrow \mathbb{R},\ \x\mapsto t_p(\x),$$
 the smallest period of the orbit $\gamma_{\x}$ on $\mathbb{R}^+$.
\end{definition}

\noindent According to Eq.(\ref{EQ:r}), for every point $\x\in\mathcal{P}$, if $e(\x)\neq 0$, the associated orbit $\gamma_{\x}$ on $[0,t_p(\x)]$ has its perigee point and apogee point at $\theta(t,\x) = 0$ and $\pi$, respectively. Thus, let 
\begin{eqnarray}
r_p:\mathcal{P}\rightarrow \mathbb{R},\ r_p(\x) &=& \frac{ \parallel \h\parallel^2}{\mu(1 + e(\x))},
\label{EQ:r_min}\\
r_a:\mathcal{P}\rightarrow \mathbb{R},\ r_a(\x) &=& \frac{ \parallel \h\parallel^2}{\mu(1 - e(\x) )},
\label{EQ:r_max}
\end{eqnarray}
we say $r_p(\x)$ and $r_a(\x)$ are the perigee and apogee distances of the orbit $\gamma_{\x}$ on $[0,t_p(\x)]$ if $e(\x)\neq 0$. Note that $r_{a}(\x) = r_{p}(\x)$ if and only if $e(\x) = 0$, which corresponds to a circular orbit. 

\begin{property}[Minimum radius and maximum radius]
Given every periodic orbit $(\tilde{\r}(t),\tilde{\v}(t))=\gamma_{\x}(t)$  on $[0,t_p(\x)]$ in $\mathcal{P}$, we have $r_p(\x) \leq \parallel \tilde{\r}(t)\parallel \leq r_a(\x)$ on $[0,t_p(\x)]$. Thus, the perigee distance $r_p(\x)$ and apogee distance $r_a(\x)$ are the minimum radius and maximum radius of the orbit $(\tilde{\r}(t),\tilde{\v}(t))$ on $[0,t_p(\x)]$.
\end{property}

\subsection{Admissible controlled trajectory of $\Sigma_{\mathrm{sat}}$}

For every initial point $\y_i=(\x_i,m_i)\in\tilde{\mathcal{X}}\times\mathbb{R}^*_+$ and measurable control function $\boldsymbol{\tau}(\cdot)$ taking values in $\mathcal{T}(\tau_{\mathrm{max}})$ with $\tau_{\mathrm{max}}>0$, let $\tilde{t}_f\in\mathbb{R}^+$ be the maximum time such that the corresponding trajectory $\Gamma(t,\boldsymbol{\tau},\y_i)$ of $\Sigma_{\mathrm{sat}}$ lies in $\mathcal{X}\times\mathbb{R}^*_+$, i.e., if $\Gamma(t,\boldsymbol{\tau},\y_i)=(\x(t),m(t))$, then $\x(t)\in\tilde{\mathcal{X}}$ and $m(t) > 0$ on $[0,\tilde{t}_f)$. We use ${\mathcal{I}}_{\Gamma}$ to denote $[0,\tilde{t}_f)$.
We say $\Gamma(t,\boldsymbol{\tau},\y_i,m_i)$ on ${\mathcal{I}}_{\Gamma}$ is the controlled trajectory of $\Sigma_{\mathrm{sat}}$ starting from $\y_i$ and associated with $\boldsymbol{\tau}(\cdot)$.

\begin{remark}
For every point $\y_i=(\x_i,m_i)\in\mathcal{P}\times \mathbb{R}_+^\ast$, let $(\x(t),m(t))=\Gamma(t,\boldsymbol{0},\y_i)$ on $\mathcal{I}_{\Gamma}$, we have that $\x(t) = \gamma_{\x_i}(t)$ and $m(t) = m_i$ for every $t\geq 0$, i.e., $\tilde{t}_f = \infty$.
\end{remark}

\begin{definition}[Controlled Keplerian motion]
Given every initial point $\y_i=(\x_i,m_i)\in\mathcal{P}\times\mathbb{R^*_+}$ and measurable control function $\boldsymbol{\tau}(\cdot)$ taking values in $\mathcal{T}(\tau_{\mathrm{max}})$ with $\tau_{\mathrm{max}}>0$, the corresponding trajectory $\Gamma(t,\boldsymbol{\tau}(\cdot),\y_i,m_i)$ of $\Sigma_{\mathrm{sat}}$ is called a controlled Keplerian motion.
\label{DE:Controlled_Keplerian_Motion}
\end{definition}

Let $r_c > 0$ and $M_0>0$ denote the radius of the surface of atmosphere around the Earth and the mass of a satellite without any fuel, respectively, then given every point $(\x,m)\in\mathcal{P}\times\mathbb{R^*_+}$ on the trajectories of Keplerian motions, it is required that $\parallel \r \parallel > r_c$ and $m > M_0$.  
\begin{definition}[Admissible region]
We define the set
\begin{eqnarray}
\mathcal{A} = \big\{\x=(\r,\v)\in\mathcal{P}\ \arrowvert\ \parallel \r \parallel > r_c \big\},
\label{EQ:admissible_region_A}
\end{eqnarray}
the admissible region in $\mathcal{P}$ for Keplerian motion and/or controlled Keplerian motion.
\end{definition}

\begin{definition}[Admissible controlled trajectory]
Given every $M_0 > 0$, we say the controlled trajectory $(\x(t),m(t))=\Gamma(t,\boldsymbol{\tau},\x_i,m_i)$ of  $\Sigma_{\mathrm{sat}}$ on some finite intervals $[0,t_f]\subset{\mathcal{I}}_{\Gamma}$ with initial condition $(\x_i,m_i)\in\mathcal{A}\times\mathbb{R}^*_+$ is an admissible controlled trajectory if $(\x(t))\in\mathcal{A}$ and $m(t) \geq M_0$ for $t\in[0,t_f]$.
\end{definition}  
\noindent For every time interval $[0,t_f]\subset\mathcal{I}_{\Gamma}$, since $\dot{m}(t)\leq 0$, it follows $m(t_f) \leq m(t)$ on $[0,t_f]$. Thus, the inequality $m(t)\geq M_0$ can be ensured by $m(t_f) \geq M_0$.

\subsection{Controlled problems in $\mathcal{A}$}

For $\x\in\mathcal{A}$, let $(\tilde{\r}(t),\tilde{\v}(t))=\gamma_{\x}(t)$ on $\mathbb{R}^+$, we have that the inequality $\parallel \tilde{\r}(t)\parallel > r_c$ is satisfied on $\mathbb{R}^+$ if $r_p(\x) > r_c$. Thus, we define the set:
\begin{eqnarray}
\mathcal{P}^{+} &=& \{(\r,\v)\in \mathcal{P}:  r_{p}(\x) > r_c\}.
\end{eqnarray}
It is immediate to see that the periodic uncontrolled trajectory $\gamma(t,\x)$ starting at any $\x\in\mathcal{P}^{+}$ remains in $\mathcal{P}^{+}$.

Let
\begin{eqnarray}
\mathcal{P}^{-} &=& \{\x=(\r,\v)\in \mathcal{P}\ \arrowvert\ \parallel {\r} \parallel > r_c,\ r_{p}(\x) < r_c< r_{a}(\x)\}.
\end{eqnarray}
Then, for every point $\x\in\mathcal{P}^-$, there exists an interval $[t_1,t_2]\in[0,t_p(\x)]$ such that $\parallel \tilde{\r}(t)\parallel \leq r_c$ for $t\in[t_1,t_2]$. Thus, placing a satellite on a point $\x\in\mathcal{P}^-$, it can move out of the admissible region $\mathcal{A}$.
\begin{definition}[Stable periodic region $\mathcal{P}^+$ and unstable periodic region $\mathcal{P}^-$ in $\mathcal{A}$]
We say that the two sets $\mathcal{P}^+$ and $\mathcal{P}^-$ are the stable and unstable periodic regions, respectively.
\end{definition}

\noindent All the satellites periodically moving around the Earth are located in the stable periodic region $\mathcal{P}^{+}$. In order to fulfill observation or other mission requirements, a satellite is controlled to move from one point $\x_i$ in $\mathcal{P}^{+}$ to another point $\x_f$ in $\mathcal{P}^+$ by its control system.

\begin{definition}[{\it Orbital Transfer Problem (OTP)}]
We say that the problem of controlling a satellite from a point $\x_i$ in $\mathcal{P}^{+}$ to another point $\x_f$ in $\mathcal{P}^+$ is the orbital transfer problem, see the first figure of Fig.~\ref{Fig:OTP}.
\label{DE:OTP}
\end{definition}

\noindent For a typical space mission, in order to place a satellite into a stable orbit in $\mathcal{P}^{+}$, a rocket is used to carry the satellite from the surface of the Earth to a point $\x_i$ in $\mathcal{P}^{-}$, at which the rocket and the satellite are separated. From this moment on, the satellite is controlled by its own control system to be inserted into a stable orbit in $\mathcal{P}^{+}$.
\begin{definition}[{\it Orbital Insertion Problem (OIP)}]
We say that the problem of controlling a satellite from an initial point $\x_i\in \mathcal{P}^{-}$ to a final point $\x_f\in\mathcal{P}^{+}$ is the orbit insertion problem, see the third figure of Fig.~\ref{Fig:OIP}.
\end{definition}

\noindent After a satellite in the stable region $\mathcal{P}^{+}$ finishes its mission, it should be decelerated to return to the unstable region $\mathcal{P}^{-}$. Then, the satellite will coast into atmosphere such that the aerodynamic pressure will act as a control to control the satellite to fly to landing sites.
\begin{definition}[{\it De-Orbit Problem (DOP)}]
We say that the problem of controlling a satellite from an initial point $\x_i\in \mathcal{P}^{+}$ to a final point $\x_f\in\mathcal{P}^{-}$ is the de-orbit problem, see the second figure of Fig.~\ref{Fig:DOP}.
\end{definition}

\begin{figure}[!h]
    \centering
    \begin{subfigure}[b]{0.3\textwidth}
        \centering
        \includegraphics[trim=0.5cm 0.3cm 16.5cm 0.5cm, clip=true, width=1\textwidth, angle=0]{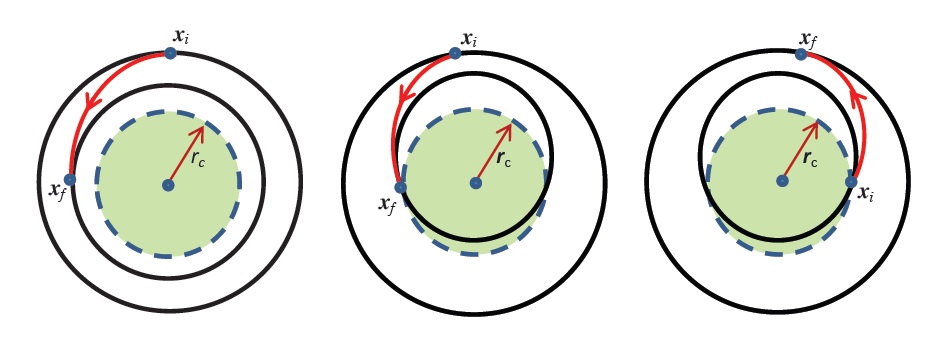}
        \caption{OTP}
        \label{Fig:OTP}
    \end{subfigure}
     \hfill
    \begin{subfigure}[b]{0.3\textwidth}
        \centering
        \includegraphics[trim=16.5cm 0.3cm 0.5cm 0.5cm, clip=true, width=1\textwidth, angle=0]{Transfering_Orbit.jpg}
        \caption{OIP}
        \label{Fig:OIP}
    \end{subfigure}
    \hfill
    \begin{subfigure}[b]{0.3\textwidth}
        \centering
        \includegraphics[trim=8.5cm 0.3cm 8.5cm 0.5cm, clip=true, width=1\textwidth, angle=0]{Transfering_Orbit.jpg}
        \caption{DOP}
        \label{Fig:DOP}
    \end{subfigure}
       \caption{OTP, OIP, and DOP.}
    \label{Fig:Transferring_Orbit}
\end{figure}

\section{Controllability}\label{SE:Controllability}

According to the definition for controlled Keplerian motion in {\it Definition \ref{DE:Controlled_Keplerian_Motion}}, the controllability of Keplerian motion deals with the existence of admissible controlled trajectories for OTP, OIP, and DOP.

\begin{definition}[Controllability for OTP]
We say that the system $\Sigma_{\mathrm{sat}}$ is controllable for OTP  if there exists $\tau_{\mathrm{max}}>0$ so that, for every initial mass $m_i > 0$ and every initial and final points $(\x_i,\x_f)\in(\mathcal{P}^{+})^2$, there exists a time $t_f\in{\mathcal{I}}_{\Gamma}$ and an admissible controlled trajectory $(\x(t),m(t))=\Gamma(t,\boldsymbol{\tau},\x_i,m_i)$ of $\Sigma_{\mathrm{sat}}$  on $[0,t_f]$ in $\mathcal{A}\times\mathbb{R}^*_+$ such that $\x(t_f) = \x_f$.
\end{definition}

\begin{definition}[Controllability for OIP and DOP]
We say that the system $\Sigma_{\mathrm{sat}}$ is controllable for OIP (DOP respectively) from any point 
$\x_i\in \mathcal{P}^{-}$ ($\x_i\in \mathcal{P}^{+}$ respectively) if for every initial mass $m_i > 0$
there exists $\tau_{\mathrm{max}}>0$ so that, for every final point $\x_f\in\mathcal{P}^{+}$ ($\x_f\in \mathcal{P}^{-}$ respectively), there exists a time $t_f\in{\mathcal{I}}_{\Gamma}$ and an admissible controlled trajectory $(\x(t),m(t))=\Gamma(t,\boldsymbol{\tau},\x_i,m_i)$ of $\Sigma_{\mathrm{sat}}$  on $[0,t_f]$ in $\mathcal{A}\times\mathbb{R}^*_+$ such that $\x(t_f) = \x_f$.
\end{definition}

%

\noindent For every initial point $\x_i \in{\mathcal{X}}$ and $\varepsilon>0$, we use $\Gamma_{\varepsilon}(t,\u(t),\x_i)$ to denote the trajectory of $\Sigma_{\varepsilon}$ in Eq.(\ref{EQ:Sigma_varepsilon}) associated with a measurable control $\u(\cdot):[0,\bar{t}_f]\rightarrow \mathcal{B}_{\varepsilon}$ and we define $\bar{t}_f\in\mathbb{R}^+$ as the maximum time such that $\Gamma_{\varepsilon}(t,\u(t),\x_i)$ lies in $\mathcal{X}$ on $[0,\bar{t}_f)$. Set 
$\bar{\mathcal{I}}=[0,\bar{t}_f )$. We refer to $\Gamma_{\varepsilon}(t,\u(t),\x_i)$ as the controlled trajectory of $\Sigma_{\varepsilon}$ starting from $\x_i$ and corresponding to the 
control $\u(\cdot)$.

\begin{remark}
Since $\gamma_{\x}(t)=\Gamma_{\varepsilon}(t,\boldsymbol{0},\x)$ on $\bar{\mathcal{I}}$, the uncontrolled trajectory $\Gamma_{\varepsilon}(t,\boldsymbol{0},\x)$ is periodic on $\mathbb{R}^+$ if $\x\in\mathcal{P}$.
\end{remark}

\begin{lemma}
Fix $\varepsilon > 0$ and $\y_i=(\x_i,m_i)\in{\mathcal{X}}\times\mathbb{R}^*_+$. Then, given every measurable control $\u(\cdot):[0,t_f]\rightarrow\mathcal{B}_{\varepsilon}$, if $\tau_{\mathrm{max}} \geq \varepsilon m_i$, then there exists $M_0 > 0$ and an admissible controlled trajectory $(\x(t),m(t))=\Gamma(t,\boldsymbol{\tau},\x_i,m_i)$ of $\Sigma_{\mathrm{sat}}$ on $[0,t_f]$ in $\mathcal{A}\times[M_0,m_i]$  such that $\Gamma_{\varepsilon}(t,\u,\x_i) = \x(t)$ for every $t\in[0,t_f]$ and $m(t_f) \geq M_0$. 
\label{LE:controllability_mass_varying}
\end{lemma}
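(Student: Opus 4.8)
The plan is to exploit the fact that the control enters the $(\r,\v)$--dynamics of $\Sigma_{\mathrm{sat}}$ only through the mass-normalized thrust $\boldsymbol{\tau}/m$, whereas in $\Sigma_{\varepsilon}$ the control $\u$ enters $\dot\v$ directly. Hence the two $(\r,\v)$--flows coincide precisely when $\boldsymbol{\tau}/m=\u$, i.e.\ when $\boldsymbol{\tau}(t)=m(t)\,\u(t)$. I would therefore take this relation as the \emph{definition} of the thrust to be synthesized and then verify that the resulting trajectory $\Gamma(t,\boldsymbol{\tau},\x_i,m_i)$ has all the required properties.

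The main point to get right is that the prescription $\boldsymbol{\tau}=m\,\u$ looks circular: $\boldsymbol{\tau}$ is defined through $m$, while $m$ is itself driven by $\boldsymbol{\tau}$ via $\dot m=-\beta\|\boldsymbol{\tau}\|$. The observation that resolves this is that the mass equation closes on itself. Indeed, since $m>0$ one has $\|\boldsymbol{\tau}\|=\|m\,\u\|=m\,\|\u\|$, so substitution yields the scalar linear Carath\'eodory ODE
\begin{equation}
\dot m(t) = -\beta\,\|\u(t)\|\,m(t),\qquad m(0)=m_i,
\end{equation}
whose right-hand side is measurable in $t$ and Lipschitz in $m$. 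This determines $m(\cdot)$ independently of $\boldsymbol{\tau}$, through the explicit formula
\begin{equation}
m(t) = m_i\,\exp\!\Big(-\beta\!\int_0^t\|\u(s)\|\,ds\Big),\qquad t\in[0,t_f].
\end{equation}
With $m(\cdot)$ in hand I would then set $\boldsymbol{\tau}(t):=m(t)\,\u(t)$, which is measurable as the product of the absolutely continuous function $m$ and the measurable control $\u$.

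From the explicit formula I would read off the three quantitative facts. First, $m(t)>0$ for all $t$, and since $t\mapsto\int_0^t\|\u\|\,ds$ is nondecreasing, $m$ is nonincreasing, so $m(t)\le m_i$; setting $M_0:=m(t_f)>0$ gives $m(t)\in[M_0,m_i]$ on $[0,t_f]$ and in particular $m(t_f)=M_0\ge M_0$. Second, using $m(t)\le m_i$, $\|\u(t)\|\le\varepsilon$ and the hypothesis $\tau_{\mathrm{max}}\ge\varepsilon m_i$, one obtains $\|\boldsymbol{\tau}(t)\|=m(t)\|\u(t)\|\le\varepsilon m_i\le\tau_{\mathrm{max}}$, so $\boldsymbol{\tau}(\cdot)$ takes values in $\mathcal{T}(\tau_{\mathrm{max}})$ and is an admissible thrust.

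Finally I would identify the trajectories. With $\boldsymbol{\tau}=m\,\u$ the velocity equation of $\Sigma_{\mathrm{sat}}$ becomes $\dot\v=-\mu\|\r\|^{-3}\r+\boldsymbol{\tau}/m=-\mu\|\r\|^{-3}\r+\u$, which together with $\dot\r=\v$ is exactly the system $\Sigma_{\varepsilon}:\dot\x=\f_0(\x)+\f_1(\x)\u$ with the same initial datum $\x_i$. By uniqueness of Carath\'eodory solutions the $\x$-component coincides with $\Gamma_{\varepsilon}(t,\u,\x_i)$ on $[0,t_f]$; since $\x(t)=\Gamma_\varepsilon(t,\u,\x_i)$, it occupies exactly the same region of $\mathcal{X}$ as the $\Sigma_\varepsilon$-trajectory, so it lies in $\mathcal{A}$ on $[0,t_f]$ as soon as the latter does, which is the setting in which the lemma is invoked. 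Combining this with $m(t)\in[M_0,m_i]$ shows $\Gamma(t,\boldsymbol{\tau},\x_i,m_i)$ is an admissible controlled trajectory in $\mathcal{A}\times[M_0,m_i]$ satisfying $\x(t)=\Gamma_\varepsilon(t,\u,\x_i)$ and $m(t_f)\ge M_0$, as required. The only genuinely delicate step is the decoupling of the mass equation; everything else is a direct verification.
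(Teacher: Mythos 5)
Your proof is correct and follows essentially the same route as the paper's: define $\boldsymbol{\tau}(t)=m(t)\,\u(t)$, observe that the mass equation then closes into $\dot m=-\beta\|\u\|m$ with the explicit exponential solution, bound $\|\boldsymbol{\tau}\|\le\varepsilon m_i\le\tau_{\mathrm{max}}$ using $m\le m_i$, and take $M_0$ from the final mass. If anything, your write-up is more careful than the paper's, since you explicitly resolve the apparent circularity of the prescription $\boldsymbol{\tau}=m\u$ via the decoupling argument, which the paper's proof takes for granted.
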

\begin{proof}
Since $m(t)\leq m_i$ for each time $t \in[0,t_f]$ and $\tau_{\mathrm{max}} \geq \varepsilon m_i$, it follows that the thrust vector $\boldsymbol{\tau}(\cdot)$ on $[0,t_f]$ can take values in the set $\mathcal{T}$ in Eq.(\ref{EQ:thrust_admissible}) such that $\boldsymbol{\tau}(t)/m(t) = \u(t)$  forevery time $t\in[0,t_f]$. Thus, let $(\x(t),m(t))=\Gamma(t,\boldsymbol{\tau},\x_i,m_i)$, we have $\Gamma_{\varepsilon}(t,\u,\x_i) = \x(t)$ for every time $t\in[0,t_f]$.
Since along the trajectory $(\x(t),m(t))=\Gamma(t,\boldsymbol{\tau},\x_i,m_i)$ on $[0,t_f]$, we have $\u(t)=\boldsymbol{\tau}(t)/m(t)$, which implies that $\dot{m}(t) = -\beta \parallel \u(t)\parallel m(t)$. Thus, we obtain
\begin{eqnarray}
m(t) &=& m_i e^{-\beta \int_{0}^{t_f}\parallel \u(t)\parallel dt}\nonumber\\
 &>& m_ie^{-\beta \varepsilon t_f}.
\end{eqnarray}
Let $M_0: = m_i e^{-\beta \varepsilon t_f} > 0$. Then $m(t_f) \geq M_0$ and the lemma is proved. 
\end{proof}

\noindent In order to study controllability, it is necessary to first show that the admissible region $\mathcal{A}$ is a connected subset of $\mathcal{P}$. 
\begin{lemma}[Connectedness of $\mathcal{A}$]
The admissible region $\mathcal{A}$ is an arc-connected subset of $\mathcal{P}$, i.e., for every initial point $\x_i\in\mathcal{A}$ and every final point $\x_f\in \mathcal{A}$, there exists a continuous path $f:[0,1]\rightarrow \mathcal{A},\ \lambda \mapsto \x(\lambda)$ such that $\x(0)=\x_i$, and $\x(1)=\x_f$.
\label{PR:Connectedness}
\end{lemma}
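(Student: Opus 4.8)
The plan is to prove more than connectedness of pairs: I would show that \emph{every} point of $\mathcal{A}$ can be joined, by a path lying in $\mathcal{A}$, to a single fixed reference state $\x_0$, taken to be a circular orbit of some radius $\rho_0>r_c$ in a standard plane and phase. Since ``joinable by a continuous path in $\mathcal{A}$'' is an equivalence relation on the points of $\mathcal{A}$, this immediately yields the claim: given $\x_i,\x_f\in\mathcal{A}$, concatenate the path from $\x_i$ to $\x_0$ with the reverse of the path from $\x_f$ to $\x_0$. I would build the path from an arbitrary $\x=(\r,\v)\in\mathcal{A}$ to $\x_0$ out of three elementary moves, each easily seen to stay in $\mathcal{A}$: (i) circularize the orbit while holding the position fixed; (ii) rotate the resulting circular orbit into the standard plane and phase; (iii) rescale its radius to $\rho_0$.

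First I would carry out move (i). Freeze the position at $\r$, where $\|\r\|>r_c$, and let only the velocity vary. A state $(\r,\boldsymbol{w})$ lies in $\mathcal{A}$ exactly when $\boldsymbol{w}$ belongs to
\[
U_{\r}=\Big\{\,\boldsymbol{w}\in\mathbb{R}^3 \ \arrowvert\ \tfrac12\|\boldsymbol{w}\|^2<\tfrac{\mu}{\|\r\|},\ \r\times\boldsymbol{w}\neq 0\,\Big\},
\]
that is, the open ball of radius $\sqrt{2\mu/\|\r\|}$ with the line $\mathbb{R}\r$ deleted: the ball condition encodes $E<0$ and the condition $\r\times\boldsymbol{w}\neq 0$ encodes $\h\neq0$. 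Because $\mathbb{R}\r$ has codimension two in $\mathbb{R}^3$, its removal does not disconnect the ball, so $U_{\r}$ is path-connected. Both the given $\v$ and a circular velocity $\v^{c}$ (any vector orthogonal to $\r$ of norm $\sqrt{\mu/\|\r\|}$) lie in $U_{\r}$, hence are joined by a path in $U_{\r}$; keeping $\r$ fixed keeps the whole path in $\mathcal{A}$. Its endpoint $(\r,\v^{c})$ is a circular orbit of radius $\|\r\|>r_c$, so $r_p=r_a=\|\r\|>r_c$ and it lies in $\mathcal{P}^{+}\subset\mathcal{A}$.

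For moves (ii) and (iii) I would use the symmetries of circular orbits. For (ii), the map $(\r,\v)\mapsto(R\r,R\v)$ with $R\in SO(3)$ sends a circular-orbit state to another of the same radius and energy; choosing a continuous path $R(\lambda)$ in the path-connected group $SO(3)$ from $I_3$ to a rotation carrying the plane and phase to the standard ones gives a path in $\mathcal{A}$, since the radius (hence $\|\r\|>r_c$ and $E<0$) is preserved and $\h$ merely rotates, staying nonzero. For (iii), with the orthonormal directions $\hat{\r},\hat{\v}$ held fixed, set $\r(\lambda)=\rho(\lambda)\hat{\r}$ and $\v(\lambda)=\sqrt{\mu/\rho(\lambda)}\,\hat{\v}$ for a continuous $\rho(\lambda)$ running from $\|\r\|$ to $\rho_0$ with $\rho(\lambda)>r_c$ throughout; along this path $E=-\mu/(2\rho(\lambda))<0$, $\h\neq0$, and $\|\r(\lambda)\|=\rho(\lambda)>r_c$, so it remains in $\mathcal{A}$ and terminates at $\x_0$.

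The only genuinely delicate point is the connectedness invoked in move (i): with the position pinned at a single radius, one must simultaneously avoid the radial configurations ($\h=0$, where the conic-section description degenerates) and the unbounded ones ($E\geq0$). The clean resolution is the codimension-two observation above, that the forbidden velocities form the line $\mathbb{R}\r$, whose deletion leaves the velocity ball connected. The remaining moves rely only on the path-connectedness of $SO(3)$ and a one-parameter scaling, and are routine; I expect no difficulty in the concatenation, which is purely formal.
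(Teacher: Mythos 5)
Your proof is correct, but it takes a genuinely different route from the paper's. The paper proves the lemma in the modified equinoctial orbital elements (MEOE) of its appendix: it uses the covering map $(\r,\v):\mathcal{Z}\rightarrow\mathcal{P}$ and joins $\z_i$ to $\z_f$ by linear interpolation of the elements $(e_x,e_y,h_x,h_y,l)$, with the one twist that $P(\lambda)$ is chosen so that $\parallel\r(\lambda)\parallel=(1-\lambda)r_i+\lambda r_f$, which stays above $r_c$ by convexity; admissibility then reduces to $P(\lambda)>0$ and $e_x(\lambda)^2+e_y(\lambda)^2<1$, the latter again by convexity of the unit disk. You instead work directly in Cartesian coordinates and reduce everything to a fixed circular reference state through three moves --- circularization at fixed position, an $SO(3)$ rotation, and a radial rescaling --- the only nontrivial topological input being that an open ball in $\mathbb{R}^3$ minus a line through its center (a codimension-two set) remains path-connected. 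What your approach buys: it is elementary and self-contained, avoiding the orbital-element machinery entirely, in particular the covering-map assertion and the coordinate subtleties that make such charts delicate. What the paper's approach buys: a single explicit interpolation formula, and, more importantly, reusability --- the same construction with $P(\lambda)$ re-chosen so that $r_p(\x(\lambda))=(1-\lambda)r_p(\x_i)+\lambda r_p(\x_f)$ immediately proves the subsequent lemma on connectedness of $\mathcal{P}^+$, whereas your move (i) does not control $r_p$ along the path (velocities close to the radial line $\mathbb{R}\r$ yield highly eccentric orbits whose perigee drops below $r_c$), so extending your argument to $\mathcal{P}^+$ would require an additional idea.
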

\begin{proof}
We use the MEOE coordinates (cf. Definition~\ref{meoe}) to prove the result, i.e., it is enough to show that $\mathcal{Z}$ is arc-connected.
Let us choose two point $\z_i$ and $\z_f$ in $\mathcal{Z}$ given by
$$
\z_i=(P_i,e_{x_i},e_{y_i},h_{x_i},h_{y_i},l_i),\ \ \z_f=(P_f,e_{x_f}, e_{y_f},h_{x_f},h_{y_f},l_f),
$$
with $\x_i=\x(\z_i)$ and $\x_f=\x(\z_f)$. We thus define the path $\z:[0,1]\rightarrow\mathcal{Z}$
by $$\z(\lambda)=(P(\lambda),e_x(\lambda) ,e_y(\lambda),h_x(\lambda) ,h_y(\lambda),l(\lambda) )$$ where 
\begin{eqnarray}
P(\lambda) &=& [(1-\lambda) r_i+\lambda r_f ][1 + e_x(\lambda)\cos(l(\lambda)) + e_y(\lambda)\sin(l(\lambda))],\nonumber\\
e_x(\lambda) &=&  (1-\lambda)e_{x_i}+\lambda e_{x_f},\ 
e_y(\lambda) = (1-\lambda)e_{Y_i}+\lambda e_{y_f},\nonumber\\
h_x(\lambda) &=&(1-\lambda)h_{x_i}+\lambda h_{x_f},\ 
h_y(\lambda) =(1-\lambda)h_{h_i}+(1-\lambda)e_{x_i},\nonumber\\
l(\lambda) &=&(1-\lambda)l_i+\lambda l_f,\nonumber
\end{eqnarray}
where $r_f = \parallel \r_f \parallel$ and $r_i = \parallel \r_i \parallel$. Note that $e_x(\lambda)^2 + e_y(\lambda)^2 < 1$ for each $\lambda\in[0,1]$. Let $g(\lambda)=(P(\lambda),e_x(\lambda),e_y(\lambda),h_x(\lambda),h_y(\lambda),l(\lambda))$ on $[0,1]$, we then have that $g(0)=\z_i$ and $g(1)=\z_f$. Consider the continuous function $\x(\lambda)=(\r(\z(\lambda)),\v(\z(\lambda)))$ for $\lambda\in [0,1]$. It follows that $\x(0) = \x_i$ and $\x(1)=\x_f$. It is immediate that $\x(\lambda)\in \mathcal{A}$ for  $\lambda\in [0,1]$. 
Finally, since $P(\lambda ) = (\r(\lambda)\times\v(\lambda))^2/\mu > 0$ and $0\leq e = \sqrt{e_x(\lambda)^2 + e_y(\lambda)^2} < 1$, we have $\r(\lambda)\times\v(\lambda)\neq 0$ and $E(\lambda) = \frac{\v(\lambda)^2}{2} - \frac{\mu}{\parallel \r(\lambda)\parallel} < 0$ on $[0,1]$. This proves the lemma.
\end{proof}

\noindent We also need the following lemma.

\begin{lemma}[Connectedness of $\mathcal{\mathcal{P}^+}$]
The set $\mathcal{P}^+$ is a connected subset of $\mathcal{A}$.
\label{PR:Connectedness_P+}
\end{lemma}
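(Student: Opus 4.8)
The plan is to argue exactly as in the proof of Lemma~\ref{PR:Connectedness}, by passing to the MEOE coordinates (cf. Definition~\ref{meoe}) and showing that the image of $\mathcal{P}^+$ in coordinate space is \emph{convex}. Since the coordinate map $\z\mapsto\x(\z)=(\r(\z),\v(\z))$ is continuous and carries this image onto $\mathcal{P}^+$, and since a convex set is arc-connected, the arc-connectedness (hence connectedness) of $\mathcal{P}^+$ will follow immediately.

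First I would identify the region. In the coordinates $\z=(P,e_x,e_y,h_x,h_y,l)$ the parameter is $P=\parallel\h\parallel^2/\mu$ and the eccentricity is $e=\sqrt{e_x^2+e_y^2}$, so that $\mathcal{P}$ corresponds to $\{P>0,\ e_x^2+e_y^2<1\}$ with $h_x,h_y,l$ unconstrained. By Eq.(\ref{EQ:r_min}) the perigee distance is $r_p=P/(1+e)$, so the defining condition $r_p(\x)>r_c$ of $\mathcal{P}^+$ becomes
\begin{eqnarray}
\mathcal{Z}^{+}=\Big\{(P,e_x,e_y,h_x,h_y,l)\ \arrowvert\ e_x^2+e_y^2<1,\ P> r_c\big(1+\sqrt{e_x^2+e_y^2}\big)\Big\}.\nonumber
\end{eqnarray}
Note that the last inequality forces $P>r_c>0$, so the constraint $P>0$ is automatically subsumed.

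The key (and essentially only) point is that $\mathcal{Z}^+$ is convex. The open unit disk $\{e_x^2+e_y^2<1\}$ is convex, the map $\phi(e_x,e_y)=r_c\big(1+\sqrt{e_x^2+e_y^2}\big)$ is convex because $\sqrt{e_x^2+e_y^2}$ is a norm, and the strict epigraph $\{P>\phi(e_x,e_y)\}$ of a convex function is convex; intersecting these and leaving $h_x,h_y,l$ free yields a convex subset of $\mathbb{R}^6$. Concretely, for two points $\z_i,\z_f\in\mathcal{Z}^+$ and the segment $\z(\lambda)=(1-\lambda)\z_i+\lambda\z_f$, the disk constraint is preserved by convexity of the disk, while Jensen's inequality gives
\begin{eqnarray}
P(\lambda)=(1-\lambda)P_i+\lambda P_f>(1-\lambda)\phi(\z_i)+\lambda\phi(\z_f)\geq \phi\big(\z(\lambda)\big),\nonumber
\end{eqnarray}
so $\z(\lambda)\in\mathcal{Z}^+$ for all $\lambda\in[0,1]$.

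I expect no genuine obstacle here: once the perigee condition is rewritten through Eq.(\ref{EQ:r_min}) it is manifestly a convex constraint, in contrast with the pointwise radius constraint $\parallel\r\parallel>r_c$ defining $\mathcal{A}$, which forced the more delicate interpolation of $P$ in Lemma~\ref{PR:Connectedness}. The only items to verify carefully are that the coordinate map is continuous and maps $\mathcal{Z}^+$ exactly onto $\mathcal{P}^+$ (both inherited from the framework of Lemma~\ref{PR:Connectedness}), and that the orientation/phase coordinates $h_x,h_y,l$ play no role in connectivity, which is clear since they remain unconstrained throughout.
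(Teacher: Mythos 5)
Your proof is correct, and it works in the same framework as the paper (reduction to MEOE coordinates via the covering map of Definition~\ref{meoe}), but the key mechanism is genuinely different. The paper reuses the path of Lemma~\ref{PR:Connectedness}: it interpolates $(e_x,e_y,h_x,h_y,l)$ linearly and then \emph{redefines} $P(\lambda)=\bigl((1-\lambda)r_p(\x_i)+\lambda r_p(\x_f)\bigr)\bigl(1+\sqrt{e_x(\lambda)^2+e_y(\lambda)^2}\bigr)$, so that $r_p(\x(\lambda))=(1-\lambda)r_p(\x_i)+\lambda r_p(\x_f)>r_c$ holds identically; i.e., it designs the path so that the perigee distance is affine in $\lambda$. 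You instead take the honest straight segment in all six coordinates and observe that the region $\{P>r_c(1+\sqrt{e_x^2+e_y^2}),\ e_x^2+e_y^2<1\}$ is convex, being the strict epigraph of the convex function $(e_x,e_y)\mapsto r_c(1+\Vert(e_x,e_y)\Vert)$ intersected with a convex cylinder. The two devices amount to the same structural fact (the paper's substitution $(P,e)\mapsto(r_p,e)$ turns the region into a product, which is your convexity in disguise), but your formulation isolates a cleaner and stronger statement — convexity of the coordinate image, not merely arc-connectedness — and it transparently explains why $\mathcal{P}^+$ behaves better than $\mathcal{A}$, whose pointwise constraint $\Vert\r\Vert>r_c$ forced the ad hoc interpolation in Lemma~\ref{PR:Connectedness}. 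One point you should state explicitly (the paper glosses it in exactly the same way): $l$ lives in $\mathbb{S}$, so ``convex subset of $\mathbb{R}^6$'' must be read in the universal cover $\mathbb{R}^5\times\mathbb{R}$; the segment there projects to a continuous path in $\mathcal{Z}$, and the covering map $(\r,\v):\mathcal{Z}\rightarrow\mathcal{P}$, which carries your region exactly onto $\mathcal{P}^+$, then transports arc-connectedness to $\mathcal{P}^+$.
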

\begin{proof}
Given every two points $\x_i\in\mathcal{P}^+$ and $\x_f\in\mathcal{P}^+$, using the same technique as in the proof of {\it Lemma \ref{PR:Connectedness}}, let $\x(\lambda)=(\r(\z(\lambda)),\v(\z(\lambda)))$ on $[0,1]$, but we rewrite $P(\lambda)$ in the following form,
$$P(\lambda) = ( (1-\lambda) r_{p_i}+\lambda r_{p_f})(1 + \sqrt{e_x(\lambda)^2 + e_y(\lambda)^2}),$$
where $r_{p_i}=r_p(\x_i)$ and $r_{p_f} = r_p(\x_f)$. Then, we have
$$
r_p(\x(\lambda)) = \frac{P(\lambda)}{1 + \sqrt{e_x(\lambda)^2 + e_y(\lambda)^2}} =(1-\lambda) r_{p_i} + \lambda r_{p_f}> r_c.
$$ 
Thus, $\x(\cdot)$ takes values in $\mathcal{P}^+$ and this proves the lemma.
\end{proof}

\subsection{Controllability for OTP}

In this subsection, we first give a controllability property of $\Sigma_{\varepsilon}$ for OTP, then, according to Lemma $\ref{LE:controllability_mass_varying}$, we will establish the controllability of $\Sigma_{\mathrm{sat}}$ for OTP.

\begin{definition}
For every controlled trajectory $\bar{\x}(\cdot)=\Gamma_{\varepsilon}(\cdot,\bar{\boldsymbol{u}},\x_i)$ of $\Sigma_{\varepsilon}$ (where $\varepsilon>0$, $\bar{\boldsymbol{u}}(\cdot):[0,t_f]\rightarrow \mathcal{B}_{\varepsilon}$ is measurable and $\x_i\in\mathcal{X}$), we define 
$$\Sigma^{*}_{\varepsilon}(\bar{\x}):\dot{\boldsymbol{\lambda}}(t) = A(t)\boldsymbol{\lambda}(t) + B(t) {\boldsymbol{u}(t)},$$
the linearised system along $\bar{\x}(\cdot)$ of $\Sigma_{\varepsilon}$ on $[0,t_f]$, where 
$$
A(t)=\f_{\x}(\bar{\x}(t),\bar{\u}(t)),\ \ B(t)=\f_{\u}(\bar{\x}(t),\bar{\u}(t)),
$$
 on $[0,t_f]$.
\end{definition}

\noindent We first have a result of local controllability for the systems $\Sigma_{\varepsilon}$'s around the periodic trajectories of the drift vector field.
\begin{lemma}
Let $\bar{\x} \in  \mathcal{P}^+$. Then, for every $\rho>0$, there exists $\sigma>0$ such that the following properties hold: 
$\mathcal{B}_{\sigma}(\bar{\x})\subset \mathcal{P}^+$ and, for every $\x\in\mathcal{B}_{\sigma}(\bar{\x})$, there exists a controlled trajectory $\Gamma_{\varepsilon}(t,\u(t),\bar{\x})$ of $\Sigma_{\varepsilon}$  such that
$$\Gamma_{\varepsilon}(0,\u,\bar{\x})=\bar{\x},\ \ \Gamma_{\varepsilon}(t_p(\bar{\x}),\u,\bar{\x})={\x},$$
and
$$\parallel \Gamma_{\varepsilon}(t,\u,\bar{\x}) - \Gamma_{\varepsilon}(t,\boldsymbol{0},\bar{\x})\parallel < \rho,$$
for $t\in[0,t_p(\bar{\x})]$.
\label{LE:Local_Controllability_Along_Periodic_Trajectory}
\end{lemma}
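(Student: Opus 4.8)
The plan is to obtain the conclusion from the controllability of the linearised system $\Sigma^{*}_{\varepsilon}(\gamma_{\bar{\x}})$ along the uncontrolled periodic trajectory, combined with continuous dependence of trajectories on the control. Fix $\varepsilon>0$ and abbreviate $T=t_p(\bar{\x})$. Since $\mathcal{P}^{+}$ is open in $\mathcal{X}$ and the uncontrolled trajectory $\gamma_{\bar{\x}}=\Gamma_{\varepsilon}(\cdot,\boldsymbol{0},\bar{\x})$ is periodic and stays in $\mathcal{P}^{+}$ on $[0,T]$, continuous dependence guarantees a $\delta_0>0$ such that, for every measurable $\u$ with $\parallel\u\parallel_{\infty}\leq\delta_0$ (in particular $\u$ takes values in $\mathcal{B}_{\varepsilon}$ once $\delta_0\leq\varepsilon$), the trajectory $\Gamma_{\varepsilon}(\cdot,\u,\bar{\x})$ is defined on all of $[0,T]$ and stays in $\mathcal{P}^{+}$. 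On this neighbourhood of the zero control I would consider the endpoint map $E(\u)=\Gamma_{\varepsilon}(T,\u,\bar{\x})$, which is smooth and satisfies $E(\boldsymbol{0})=\gamma_{\bar{\x}}(T)=\bar{\x}$ by periodicity. Its differential at $\boldsymbol{0}$ is the classical input-to-state map of $\Sigma^{*}_{\varepsilon}(\gamma_{\bar{\x}})$ at time $T$,
\begin{equation}
dE(\boldsymbol{0})\,\u=\int_{0}^{T}\Phi(T,s)\,B(s)\,\u(s)\,ds,\nonumber
\end{equation}
where $\Phi$ is the fundamental matrix of $\dot{\boldsymbol{\lambda}}=A(t)\boldsymbol{\lambda}$ with $A(t)=\f_{\x}(\gamma_{\bar{\x}}(t),\boldsymbol{0})$ and $B(t)=\f_{\u}(\gamma_{\bar{\x}}(t),\boldsymbol{0})=\f_{1}$.

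The core step is to show that $dE(\boldsymbol{0})$ is onto $T_{\bar{\x}}\mathcal{X}=\mathbb{R}^{6}$, that is, that $\Sigma^{*}_{\varepsilon}(\gamma_{\bar{\x}})$ is controllable on $[0,T]$. Here the structure of $\Sigma_{\varepsilon}$ makes the verification immediate: $B$ is the constant matrix $\left(\begin{smallmatrix}\boldsymbol{0}\\ I_3\end{smallmatrix}\right)$, while $A(t)=\left(\begin{smallmatrix}0 & I_3\\ G(t)& 0\end{smallmatrix}\right)$ with $G(t)=\partial_{\r}\!\big({-\mu\,\r}/{\parallel\r\parallel^{3}}\big)$ evaluated along $\gamma_{\bar{\x}}$. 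Forming the Kalman matrices $M_{0}=B$ and $M_{1}=A(t)B-\dot{B}=\left(\begin{smallmatrix}I_3\\ 0\end{smallmatrix}\right)$, one sees that $[\,M_{0}\mid M_{1}\,]=\left(\begin{smallmatrix}0 & I_3\\ I_3 & 0\end{smallmatrix}\right)$ has rank $6$ at every $t$. By the standard rank criterion for smooth time-varying linear control systems --- equivalently, by positive-definiteness of the controllability Gramian $\int_{0}^{T}\Phi(T,s)BB^{T}\Phi(T,s)^{T}\,ds$ --- the system is controllable on $[0,T]$, hence $dE(\boldsymbol{0})$ is surjective.

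From surjectivity I would select six measurable controls $\boldsymbol{w}_{1},\dots,\boldsymbol{w}_{6}$ whose images $dE(\boldsymbol{0})\boldsymbol{w}_{k}$ form a basis of $\mathbb{R}^{6}$, and set $F(\boldsymbol{\alpha})=E\big(\sum_{k=1}^{6}\alpha_{k}\boldsymbol{w}_{k}\big)$ for $\boldsymbol{\alpha}\in\mathbb{R}^{6}$ near $\boldsymbol{0}$. Then $F$ is $C^{1}$, $F(\boldsymbol{0})=\bar{\x}$, and $dF(\boldsymbol{0})$ is invertible, so by the inverse function theorem $F$ is a local diffeomorphism and its image contains a ball $\mathcal{B}_{\sigma}(\bar{\x})$; for each $\x\in\mathcal{B}_{\sigma}(\bar{\x})$ the control $\u_{\x}=\sum_{k}(F^{-1}(\x))_{k}\boldsymbol{w}_{k}$ steers $\bar{\x}$ to $\x$ in time $T$. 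As $\sigma\to 0$ one has $F^{-1}(\x)\to\boldsymbol{0}$, so $\parallel\u_{\x}\parallel_{\infty}\to 0$; therefore, after shrinking $\sigma$, the controls satisfy $\parallel\u_{\x}\parallel_{\infty}\leq\delta_{0}$, the trajectory obeys the tube estimate $\parallel\Gamma_{\varepsilon}(t,\u_{\x},\bar{\x})-\Gamma_{\varepsilon}(t,\boldsymbol{0},\bar{\x})\parallel<\rho$ on $[0,T]$ by continuous dependence, and $\mathcal{B}_{\sigma}(\bar{\x})\subset\mathcal{P}^{+}$ by openness.

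The surjectivity computation above is the conceptual heart but turns out to be painless thanks to $\f_{1}$ being constant; consequently the genuine care lies in the quantitative final paragraph, where $\sigma$ must be chosen small enough to simultaneously guarantee (i) exact reachability of every target in $\mathcal{B}_{\sigma}(\bar{\x})$, (ii) the $\rho$-tube confinement, and (iii) the admissibility constraint $\u_{\x}\in\mathcal{B}_{\varepsilon}$ --- all of which follow from the local diffeomorphism property of $F$ together with Gronwall-type continuous dependence, but must be arranged consistently.
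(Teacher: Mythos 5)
Your proposal is correct and takes essentially the same approach as the paper: both reduce the lemma to controllability of the linearised system along the periodic reference trajectory and verify it by the identical rank computation ($B_0=\f_1$ constant, $B_1=A(t)B_0=\left[\begin{smallmatrix} I_3\\ \boldsymbol{0}\end{smallmatrix}\right]$, so $[B_0,B_1]$ has rank $6$ for every $t$). The only difference is that the paper invokes Sontag's Theorem 7 of Chapter 3 and Corollary 3.5.18 as black boxes for the passage from controllability of the linearisation to local controllability along the trajectory, whereas you re-prove that passage explicitly via the endpoint map, the inverse function theorem, and Gronwall-type continuous dependence --- which is precisely the standard proof of the cited theorem.
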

\begin{proof}
\noindent According to Theorem 7 of Chapter 3 in Ref.\ \cite{Sontag:98}, it suffices to prove the controllability of the linearized system $\Sigma^{*}_{\varepsilon}(\Gamma_{\varepsilon}(t,\boldsymbol{0},\bar{\x}))$ along the periodic trajectory $\Gamma_{\varepsilon}(t,\boldsymbol{0},\bar{\x})$ on the interval $[0,t_p(\bar{\x})]$. Then, the latter controllability would follow, according to Corollary 3.5.18 of Chapter 3 in Ref.\ \cite{Sontag:98}, by the following rank condition: there exists a time $\tau\in[0,t_p(\bar{\x})]$ and a nonnegative integer $k$ such that the rank of the matrix $[B_0(\tau),B_1(\tau),\cdots,B_k(\tau)]$ equals $6$, where $B_{i+1}(t) = A(t)B_i(t) - \frac{d}{dt}B_i(t)$ for $i=1,2,\cdots$, and $B_0(t) = B(t)$.
It therefore amounts to compute some $B_i(\cdot)$'s. The explicit expressions for matrices $A$ and $B$ in terms of $\x$ are
$$A=\f_{\x} = \left[\begin{array}{cc}\boldsymbol{0}&I_3\\
-\frac{\mu}{\parallel \r \parallel^3}I_3 + 3 \frac{\mu}{\parallel \r \parallel^5}\r\cdot\r^T&\boldsymbol{0}\end{array}\right],\ \text{and}\ B=\f_{\u}=\left[\begin{array}{c}\boldsymbol{0}\\
I_3\end{array}\right].$$
Since $B_0(t)=B(t)$, it follows that
\begin{eqnarray}
B_1(t) &=& A(t)B_0(t) - \frac{d}{dt}B_0(t) \nonumber\\
&=& A(t)B_0(t) = \left[\begin{array}{cc}I_3&
\boldsymbol{0} \end{array}\right]^T\nonumber.
\end{eqnarray}
Thus, we have that the rank of the matrix $[B_0(t),B_1(t)] = \left[\begin{array}{cc} \boldsymbol{0} & I_3\\ I_3 & \boldsymbol{0} \end{array}\right]$ is equal to $6$ for every time $t\in[0,t_p(\bar{\x})]$, proving the lemma.
\end{proof}

\begin{proposition}
For $\varepsilon >0$, the control system $\Sigma_{\varepsilon}$ is controllable for OTP within $\mathcal{P}^+$, i.e., for every initial point $\x_i\in\mathcal{P}^+$ and final point $\x_f\in\mathcal{P}^+$, there exists a controlled trajectory $\Gamma_{\varepsilon}(t,\u,\x_i)$ of $\Sigma_{\varepsilon}$  in $\mathcal{P}^+$ on a finite interval $[0,t_f]\subset\bar{\mathcal{I}}$ such that $\Gamma_{\varepsilon}(t_f,\u,\x_i)=\x_f$.
\label{PR:Controllability_constant_mass}
\end{proposition}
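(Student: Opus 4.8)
The plan is to upgrade the local controllability around periodic orbits supplied by Lemma \ref{LE:Local_Controllability_Along_Periodic_Trajectory} into a global controllability statement on $\mathcal{P}^+$ by a connectedness argument, using Lemma \ref{PR:Connectedness_P+}. The structural facts that make this work are that $\mathcal{P}^+$ is open (the conditions $\h\neq 0$, $E<0$ and $r_p(\x)>r_c$ are all open), so the balls $\mathcal{B}_\sigma(\bar\x)\subset\mathcal{P}^+$ produced by Lemma \ref{LE:Local_Controllability_Along_Periodic_Trajectory} make sense, and that the drift motion is periodic, which is what lets the linearised analysis run in both time directions at the common time $t_p(\bar\x)$.

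First I would record the elementary properties of reachability inside $\mathcal{P}^+$. Writing $\mathcal{R}(\x)$ for the set of points of $\mathcal{P}^+$ reachable from $\x$ by an admissible trajectory of $\Sigma_\varepsilon$ that stays in $\mathcal{P}^+$ on a finite interval, the relation ``$\x$ reaches $\y$'' is reflexive (zero control) and transitive (concatenation of controls, the total time staying finite and the concatenated trajectory staying in $\mathcal{P}^+\subset\mathcal{X}$, so that $[0,t_f]\subset\bar{\mathcal{I}}$). Next I would extract from the \emph{proof} of Lemma \ref{LE:Local_Controllability_Along_Periodic_Trajectory} a two-sided local statement. That proof rests on the controllability of the linearised system $\Sigma^*_\varepsilon$ along the periodic orbit $\Gamma_\varepsilon(\cdot,\boldsymbol{0},\bar\x)$ on $[0,t_p(\bar\x)]$, i.e. on surjectivity of the differential of the endpoint map $\u\mapsto\Gamma_\varepsilon(t_p(\bar\x),\u,\x)$ with respect to $\u$ at $\u=\boldsymbol{0}$; together with $\Gamma_\varepsilon(t_p(\bar\x),\boldsymbol{0},\bar\x)=\bar\x$ (periodicity of the drift), the implicit function theorem yields a neighbourhood $V_{\bar\x}\subset\mathcal{P}^+$ of $\bar\x$ in which $\bar\x$ can be steered to every point \emph{and} every point can be steered to $\bar\x$. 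Shrinking the tolerance $\rho$ of Lemma \ref{LE:Local_Controllability_Along_Periodic_Trajectory} keeps the whole excursion inside the open set $\mathcal{P}^+$, which is possible since the compact orbit $\gamma_{\bar\x}$ lies in $\mathcal{P}^+$. In particular any two points of $V_{\bar\x}$ are mutually reachable through $\bar\x$.

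Then I would introduce the relation $\x\approx\y$ meaning ``$\y\in\mathcal{R}(\x)$ and $\x\in\mathcal{R}(\y)$''. This is an equivalence relation (symmetric by construction, reflexive and transitive by the first step), and the previous paragraph shows each class is open: if $\y$ lies in a class $C$, then every $\w\in V_{\y}$ is mutually reachable with $\y$, hence by transitivity $\w\approx$ every element of $C$, so $V_{\y}\subset C$. Since $\mathcal{P}^+$ is connected by Lemma \ref{PR:Connectedness_P+} and is the disjoint union of its open equivalence classes, there is a single class, so all points of $\mathcal{P}^+$ are mutually reachable. In particular, for every $\x_i,\x_f\in\mathcal{P}^+$ there is an admissible trajectory of $\Sigma_\varepsilon$ in $\mathcal{P}^+$ on a finite interval $[0,t_f]\subset\bar{\mathcal{I}}$ with $\Gamma_\varepsilon(t_f,\u,\x_i)=\x_f$, which is the assertion.

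The step I expect to be the main obstacle is the passage from the one-sided conclusion literally stated in Lemma \ref{LE:Local_Controllability_Along_Periodic_Trajectory} (only that $\bar\x$ reaches $\mathcal{B}_\sigma(\bar\x)$) to the two-sided local controllability used above, namely that points near $\bar\x$ can be driven \emph{back} to $\bar\x$. This is exactly where periodicity of the drift is indispensable: it guarantees that the free motion returns to $\bar\x$ at the common time $t_p(\bar\x)$, so the same controllable linearisation governs both directions and the implicit function theorem applies with the terminal point prescribed while the initial point varies. A secondary technical point, which I would not belabour, is keeping every concatenated trajectory inside the open set $\mathcal{P}^+$, handled by choosing the tolerances $\rho$ smaller than the distance from the relevant compact orbits to $\partial\mathcal{P}^+$.
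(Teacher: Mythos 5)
Your proof is correct, but it globalizes the local lemma by a genuinely different route than the paper. The paper's own proof stays with the one-sided statement of Lemma~\ref{LE:Local_Controllability_Along_Periodic_Trajectory}: it takes the continuous path from $\x_i$ to $\x_f$ supplied by Lemma~\ref{PR:Connectedness_P+}, uses compactness of its support to select finitely many points $\x_0=\x_i,\x_1,\dots,\x_N=\x_f$ with $\x_{j+1}\in\mathcal{B}_{\sigma}(\x_j)$, and concatenates the forward steering trajectories $\Gamma_{\varepsilon}(\cdot,\u_j,\x_j)$, each of duration $t_p(\x_j)$ and staying $\rho$-close to the free periodic orbit, so that for $\rho$ small enough the whole concatenation remains in $\mathcal{P}^+$. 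That argument only needs steering \emph{from} the center of each ball to nearby points, but it implicitly requires a $\sigma$ valid uniformly along the compact path. Your open-equivalence-class/connectedness argument avoids any uniformity issue and closes the proof by pure topology, but it genuinely requires the two-sided local statement --- that points near $\bar{\x}$ can also be steered \emph{back} to $\bar{\x}$ --- which Lemma~\ref{LE:Local_Controllability_Along_Periodic_Trajectory} does not literally assert; you correctly identify this as the crux, and your fix is sound: controllability of the linearization along the periodic free orbit together with $\Gamma_{\varepsilon}(t_p(\bar{\x}),\boldsymbol{0},\bar{\x})=\bar{\x}$ and the implicit function theorem (equivalently, applying the same rank-condition argument to the time-reversed system, whose drift trajectories are the same closed orbits run backwards) gives mutual reachability within a neighborhood, and small controls keep the excursion inside the open set $\mathcal{P}^+$. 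Both routes consume exactly the same two ingredients (Lemmas~\ref{LE:Local_Controllability_Along_Periodic_Trajectory} and~\ref{PR:Connectedness_P+}); the paper's version buys an explicit chain of slightly corrected periodic orbits joining $\x_i$ to $\x_f$, while yours trades that constructiveness for a shorter and more robust closing argument that does not require choosing the discretization of the path.
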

\begin{proof}
Since the subset $\mathcal{P}^+$ is path-connected as is shown by {\it Proposition} \ref{PR:Connectedness_P+}, it follows that any two different points $\x_i$ and $\x_f$ in $\mathcal{P}^+$ are connected by a path $\x:[0,1]\rightarrow \mathcal{P}^+,\ \lambda\mapsto \x(\lambda)$ such that $\x(0) = \x_i$ and $\x(1) = \x_f$. By compactness of the support of $\x(\cdot)$ in $\mathcal{P}^+$ there exists, for every $\sigma>0$, a finite sequence of points $\x_0,\x_1,\cdots,\x_N$, on the support of $\x(\lambda)$ so that  $\x_0 = \x_i$, $\x_n=\x_f$ and $\x_{j+1}\in\mathcal{B}_{\sigma}(\x_j)$, for $j=0,1,\cdots,N-1$.
According to $Lemma$ $\ref{LE:Local_Controllability_Along_Periodic_Trajectory}$,  for every $\rho>0$, there exists $\sigma >0$ small enough and a finite sequence of points  $\x_0,\x_1,\cdots,\x_N$ as above such that, for $j=0,1,\cdots,N-1$, 
$\x_{j+1}\in\mathcal{B}_{\sigma}(\x_j)$ and one has a controlled trajectory $\Gamma_{\varepsilon}(t,\u_j,\x_j)$ on the interval $[0,t_p(\x_j)]$ such that
$$
\Gamma_{\varepsilon}(0,\u_j,\x_j) = \x_j,\ \ \Gamma_{\varepsilon}(t_p(\x_j),\u_j,\x_j) = \x_{j+1},
$$
and
$$
\parallel \Gamma_{\varepsilon}(t,\u_j(t),\x_j) - \Gamma_{\varepsilon}(t,\boldsymbol{0},\x_j)\parallel < \rho, \hbox{ for }t\in [0,t_p(\x_j)].
$$
For $\sigma >0$, let $\mathcal{W}_j\subset\mathcal{P}^+$ be an open neighborhood of $\x_j$ such that $\mathcal{B}_{\sigma}(\x_j)\subset \mathcal{W}_j$ for $j=0,1,\cdots,N$ and set $\mathcal{P}^+_{\mathcal{W}_j} = \{\Gamma_{\varepsilon}(t,0,{\mathcal{W}_j}),\ t\geq 0\}$. For $\rho>0$ small enough, the open set $\mathcal{P}^+_{\mathcal{W}_j}$ is included in $\mathcal{P}^+$. By concatenating the $\Gamma_{\varepsilon}(\cdot,\u_j,\x_j)$ for $j=0,1,\cdots,N-1$, the  initial point $\x_i$ can be steered to $\x_f$, proving the proposition.
\end{proof}

\noindent According to Lemma~\ref{LE:controllability_mass_varying}, and recalling the definition of controllability for OTPs in Definition~\ref{DE:OTP}, we obtain the following result of controllability:
\begin{corollary}
For every $\mu>0$, $\beta > 0$, $\tau_{\mathrm{max}} > 0$, the system $\Sigma_{\mathrm{sat}}$ is controllable for OTP.
\label{PR:Controllability_varying_mass}
\end{corollary}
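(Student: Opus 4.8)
The plan is to obtain Corollary~\ref{PR:Controllability_varying_mass} as a direct synthesis of the two preceding results: the controllability of the normalized (constant-mass) system $\Sigma_\varepsilon$ within $\mathcal{P}^+$ established in Proposition~\ref{PR:Controllability_constant_mass}, and the mass-tracking statement of Lemma~\ref{LE:controllability_mass_varying}, which lifts any $\mathcal{B}_\varepsilon$-valued control for $\Sigma_\varepsilon$ to an admissible thrust for $\Sigma_{\mathrm{sat}}$ as soon as $\tau_{\mathrm{max}}\geq \varepsilon m_i$. The only genuine content is to choose the auxiliary bound $\varepsilon$ correctly as a function of the fixed $\tau_{\mathrm{max}}$ and the given initial mass $m_i$.

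First I would fix $\mu,\beta,\tau_{\mathrm{max}}>0$ and take arbitrary $m_i>0$ and $(\x_i,\x_f)\in(\mathcal{P}^+)^2$, then set $\varepsilon:=\tau_{\mathrm{max}}/m_i>0$. Applying Proposition~\ref{PR:Controllability_constant_mass} to this $\varepsilon$ produces a finite time $t_f$ and a measurable control $\u(\cdot):[0,t_f]\to\mathcal{B}_\varepsilon$ whose trajectory $\Gamma_\varepsilon(\cdot,\u,\x_i)$ stays in $\mathcal{P}^+$ and satisfies $\Gamma_\varepsilon(t_f,\u,\x_i)=\x_f$. Since $\tau_{\mathrm{max}}=\varepsilon m_i$, the hypothesis of Lemma~\ref{LE:controllability_mass_varying} is met, so there exist $M_0>0$ and an admissible controlled trajectory $(\x(t),m(t))=\Gamma(t,\boldsymbol{\tau},\x_i,m_i)$ of $\Sigma_{\mathrm{sat}}$ on $[0,t_f]$ with $\x(t)=\Gamma_\varepsilon(t,\u,\x_i)$ for all $t$ and $m(t_f)\geq M_0$. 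Because $\mathcal{P}^+\subset\mathcal{A}$ (any $\x$ with $r_p(\x)>r_c$ has $\|\r\|\geq r_p(\x)>r_c$ together with $E<0$), the lifted trajectory automatically remains in $\mathcal{A}\times\mathbb{R}^*_+$ and reaches $\x_f$ at time $t_f$, which is exactly the assertion of controllability for OTP.

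I do not expect a genuine obstacle, since both building blocks are already available; the delicate point to get right is the quantifier order. The definition of controllability for OTP allows $\tau_{\mathrm{max}}$ to be fixed first while $m_i$ ranges freely afterwards, so the choice of $\varepsilon$ must be allowed to depend on $m_i$ — which it does, and this is precisely why $\Sigma_\varepsilon$ can serve as a faithful proxy for the position--velocity dynamics of $\Sigma_{\mathrm{sat}}$ for every initial mass. The one thing worth double-checking is that the transfer time $t_f$ returned by Proposition~\ref{PR:Controllability_constant_mass} is finite — it is, being built from a finite concatenation of one-period arcs — so that $M_0=m_i e^{-\beta\varepsilon t_f}>0$ and the mass stays strictly positive throughout. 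The decoupling of the mass equation, whose right-hand side merely rescales the control magnitude, is what makes this lifting possible, and it is the mechanism that allows the conclusion to hold for \emph{every} positive $\tau_{\mathrm{max}}$ rather than only for a sufficiently large one.
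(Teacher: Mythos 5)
Your proposal is correct and follows exactly the route the paper intends: the corollary is stated there as an immediate consequence of Lemma~\ref{LE:controllability_mass_varying} combined with Proposition~\ref{PR:Controllability_constant_mass}, with the (implicit) choice $\varepsilon=\tau_{\mathrm{max}}/m_i$ that you make explicit. Your added care about the quantifier order (letting $\varepsilon$ depend on the initial mass $m_i$, which is legitimate since the proposition holds for every $\varepsilon>0$), the finiteness of $t_f$, and the inclusion $\mathcal{P}^+\subset\mathcal{A}$ fills in precisely the details the paper leaves to the reader.
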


\noindent Note that $\mathcal{P}^+\subset\mathcal{A}$, so the system $\Sigma_{\mathrm{sat}}$ is controllable for OTPs within $\mathcal{A}$ no matter what value of $\tau_{\mathrm{max}}$ the low-thrust control system can provide if the satellite takes high enough percent of total fuel, i.e., $(m_i - m_f)/m_i > 0$ is big enough, which makes senses in engineering for electric thrust systems whose maximum thrust $\tau_{\mathrm{max}}$ is very small.

\subsection{Controllability for OIP}\label{SE:P-_P+}


We provide next a controllability criterium for OIP.
\begin{lemma}
Assume that, for every point $(\x_i,m_i)\in\mathcal{P}^-\times\mathbb{R}^*_+$, there exists $\tau>0$ and a positive time $\bar{t}\in{\mathcal{I}}_{\Gamma}$ and a control $\tilde{\boldsymbol{\tau}}(\cdot):[0,\bar{t}]\rightarrow \mathcal{T}(\tau)$ such that along the controlled trajectory $(\tilde{\x},\tilde{m}(t))= \Gamma(t,\tilde{\boldsymbol{\tau}}(t),\x_i,m_i)$ on $[0,\bar{t}]$,  we have $\tilde{\x}(t)\in\mathcal{A}$ on $[0,\bar{t}]$, $\tilde{m}(\bar{t})>0$, and  $r_{p}(\tilde{\x}(\bar{t})) > r_c$. Then, the system $\Sigma_{\mathrm{sat}}$ is controllable for OIP from 
$(\x_i,m_i)$. 
\label{LE:OIP1}
\end{lemma}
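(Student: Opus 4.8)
The plan is to reduce the OIP to the already-established controllability for OTP by a two-stage concatenation. The hypothesis performs the genuinely hard part: it lets the satellite escape the unstable region $\mathcal{P}^-$ and reach a state on which the perigee constraint is satisfied. Once there, Corollary~\ref{PR:Controllability_varying_mass} (controllability for OTP, valid for \emph{every} $\tau_{\mathrm{max}}>0$) takes over and steers the satellite to an arbitrary target in $\mathcal{P}^+$.

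First I would fix $(\x_i,m_i)\in\mathcal{P}^-\times\mathbb{R}^*_+$ and apply the hypothesis to obtain $\tau>0$, a time $\bar t>0$, and a control $\tilde{\boldsymbol{\tau}}(\cdot):[0,\bar t]\to\mathcal{T}(\tau)$ whose trajectory $(\tilde{\x},\tilde m)=\Gamma(\cdot,\tilde{\boldsymbol{\tau}},\x_i,m_i)$ stays in $\mathcal{A}$, keeps $\tilde m(\bar t)>0$, and satisfies $r_p(\tilde{\x}(\bar t))>r_c$. The first observation is that the endpoint $\x^*:=\tilde{\x}(\bar t)$ lies in $\mathcal{P}^+$: indeed $\x^*\in\mathcal{A}\subset\mathcal{P}$ and $r_p(\x^*)>r_c$, which is exactly the defining condition of $\mathcal{P}^+$. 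Thus the first stage has steered the system from $\mathcal{P}^-$ into $\mathcal{P}^+$ while remaining admissible, with some surviving mass $m^*:=\tilde m(\bar t)>0$.

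Next I would set $\tau_{\mathrm{max}}:=\tau$ (uniform in the target $\x_f$, since $\tau$ depends only on $(\x_i,m_i)$) and invoke Corollary~\ref{PR:Controllability_varying_mass} at the point $(\x^*,m^*)\in\mathcal{P}^+\times\mathbb{R}^*_+$: for any prescribed $\x_f\in\mathcal{P}^+$ there is a control $\hat{\boldsymbol{\tau}}(\cdot):[0,\hat t]\to\mathcal{T}(\tau_{\mathrm{max}})$ whose $\Sigma_{\mathrm{sat}}$-trajectory remains in $\mathcal{P}^+\subset\mathcal{A}$, ends at $\x_f$, and keeps the mass positive. Concatenating $\tilde{\boldsymbol{\tau}}$ on $[0,\bar t]$ with the time-shifted $\hat{\boldsymbol{\tau}}$ on $[\bar t,\bar t+\hat t]$ yields a single measurable control with values in $\mathcal{T}(\tau_{\mathrm{max}})$; because $\tau_{\mathrm{max}}=\tau$ the first-stage control stays admissible for this bound, while the second stage is admissible for any positive bound. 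The resulting trajectory stays in $\mathcal{A}$ throughout and reaches $\x_f$ at time $t_f:=\bar t+\hat t$.

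The only point requiring care---which I regard as the main, if modest, obstacle---is the mass bookkeeping needed to exhibit an \emph{admissible} controlled trajectory. Since $\dot m\le 0$, the mass is nonincreasing, so its minimum over the closed finite interval $[0,t_f]$ is the terminal value $m(t_f)>0$; taking $M_0:=m(t_f)$ gives $m(t)\ge M_0$ on $[0,t_f]$ and, together with $\x(t)\in\mathcal{A}$, certifies admissibility. Finally, because the trajectory stays in $\mathcal{A}\times\mathbb{R}^*_+\subset\mathcal{X}\times\mathbb{R}^*_+$ on all of $[0,t_f]$, we have $t_f\in\mathcal{I}_\Gamma$, so the construction meets every requirement in the definition of controllability for OIP from $(\x_i,m_i)$.
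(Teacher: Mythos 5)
Your proposal is correct and follows essentially the same route as the paper: use the hypothesis to reach a point of $\mathcal{P}^+$ along an admissible trajectory, then invoke the OTP controllability result (the paper cites Proposition~\ref{PR:Controllability_constant_mass} together with Lemma~\ref{LE:controllability_mass_varying}, which is exactly the content of Corollary~\ref{PR:Controllability_varying_mass} that you use) and concatenate. Your explicit treatment of the concatenation and the mass lower bound $M_0 := m(t_f)$ is in fact slightly more careful than the paper's own write-up, but it is the same argument.
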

\begin{proof}
Note that the assumption implies that there exists a control $\tilde{\boldsymbol{\tau}}(\cdot):[0,\bar{t}]\rightarrow\mathcal{T}(\tau)$ such that the admissible controlled trajectory $(\tilde{\x}(t),\tilde{m}(t))= \Gamma(t,\tilde{\boldsymbol{\tau}},\x_i,m_i)$ in $\mathcal{A}\times[m(\bar{t},m_i]$ on $[0,\bar{t}]$ steers $(\x_i,m_i)$ in $\mathcal{P}^-\times\mathbb{R}^*_+$ to some $(\x(\bar{t}),m(\bar{t}))$ in $\mathcal{P}^+\times\mathbb{R}^*_+$. After arriving at $\x(\bar{t})$ in $\mathcal{P}^+$, according to Proposition~\ref{PR:Controllability_constant_mass}, it follows that there exists an $M_0\in(0,m(\bar{t})]$, a finite time $t_f\in{\mathcal{I}}_{\Gamma}$, and a control $\boldsymbol{\tau}(\cdot):[0,t_f]\rightarrow\mathcal{T}(\tau)$  such that along the controlled trajectory $(\x(t),m(t))=\Gamma(t,\boldsymbol{\tau},\tilde{\x}(\tau),\tilde{m}(\tau))$ on $[0,t_f]$, we have $\x(t)\in\mathcal{A}$ on $[0,t_f]$, $\x(t_f)=\x_f$, and $m(t_f) > M_0$.
\end{proof} 

\noindent One cannot have controllability for OIP for every value of $\tau_{\mathrm{max}}>0$. Indeed, pick a point $(\x_i,m_i)$ in $\mathcal{P}^-\times\mathbb{R}^*_+$. For every control $\boldsymbol{\tau}(\cdot)$
taking values in $\mathcal{T}(\tau_{\mathrm{max}})$, the corresponding controlled trajectory $(\x(\cdot),m(\cdot))=\Gamma(\cdot,\boldsymbol{\tau},\x_i,m_i)$ converges to $\Gamma(\cdot,\boldsymbol{0},\x_i,m_i)$  on $[0,t_p(\x_i)]$ as $\tau_{\mathrm{max}}$ tends to zero. Then, since $r_p(\x_i) < r_c$, there exists $t\in[0,t_p(\x_i)]$ such that $\parallel \r(t)\parallel < r_c$ implying that $(\x(\cdot),m(\cdot))=\Gamma(\cdot,\boldsymbol{\tau},\x_i,m_i)$ is not admissible for every control $\boldsymbol{\tau}(\cdot)$ taking values in $\mathcal{T}(\tau_{\mathrm{max}})$. For large values of $\tau_{\mathrm{max}}$, we can steer $(\x_i,m_i)\in\mathcal{P}^-\times\mathbb{R}^*_+$ to $(\x_f,m_f)\in\mathcal{P}^+\times\mathbb{R}^*_+$ as described in the following lemma.

\begin{lemma}
For every $\beta,\mu>0$ and point $\y_i=(\x_i,m_i)$ in $\mathcal{P}^-\times\mathbb{R}^*_+$, there exists  $\tau_{\mathrm{max}}$ such that the following holds:
\begin{description}
\item[$1)$] if $\tau> \tau_{\mathrm{max}}$, there exists a control $\boldsymbol{\tau}(\cdot)$ taking values in  $\mathcal{T}(\tau)$ and a positive time $\bar{t}\in{\mathcal{I}}_{\Gamma}$ such that, along the controlled trajectory $(\x(t),m(t))=\Gamma(t,\boldsymbol{\tau},\y_i,m_i)$ on $[0,\bar{t}]$, we have $\x(t)\in\mathcal{A}$ on $[0,\bar{t}]$, $m(\bar{t}) > 0$, and $r_p(\x(\bar{t})) > r_c$;
\item[$2)$] if $\tau\leq \tau_{\mathrm{max}}$, for every control $\boldsymbol{\tau}(\cdot)$ taking values in  $\mathcal{T}(\tau)$, the controlled trajectory $(\x(\cdot),m(\cdot))=\Gamma(\cdot,\boldsymbol{\tau},\y_i,m_i)$ does not reach $\mathcal{P}^+\times\mathbb{R}^*_+$.
\end{description}
\label{PR:OIP1}
\end{lemma}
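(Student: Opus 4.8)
The plan is to phrase the dichotomy through a single ``feasibility set'' and a threshold. Define
\[
F=\{\tau>0 \ \arrowvert\ \exists\, \bar t>0 \text{ and measurable } \boldsymbol{\tau}:[0,\bar t]\to\mathcal{T}(\tau)\text{ with } \x(t)\in\mathcal{A}\text{ on }[0,\bar t],\ m(\bar t)>0,\ r_p(\x(\bar t))>r_c\},
\]
where $(\x(t),m(t))=\Gamma(t,\boldsymbol{\tau},\x_i,m_i)$, and set $\tau_{\mathrm{max}}:=\inf F$. Enlarging the thrust bound never destroys an admissible trajectory, so $F$ is upward closed; hence the whole lemma is equivalent to proving that $F=(\tau_{\mathrm{max}},\infty)$ with $0<\tau_{\mathrm{max}}<\infty$. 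Indeed, item 1) is then immediate since any $\tau>\tau_{\mathrm{max}}=\inf F$ lies in $F$, while item 2) amounts to $\tau\le\tau_{\mathrm{max}}\Rightarrow\tau\notin F$, i.e. no trajectory can reach $\mathcal{P}^+$ while remaining in $\mathcal{A}$. This is the operative reading of ``reaching $\mathcal{P}^+\times\mathbb{R}^*_+$'', because a state of $\mathcal{P}^+$ satisfies $\parallel\r\parallel\ge r_p>r_c$, so an admissible insertion must stay in $\mathcal{A}$ all the way to the target.

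To get $F\neq\emptyset$ and $\tau_{\mathrm{max}}<\infty$ I would exhibit an explicit large-thrust maneuver. Fix the target velocity $\v_i'$ orthogonal to $\r_i$ with $\parallel\v_i'\parallel=\sqrt{\mu/\parallel\r_i\parallel}$, so that $(\r_i,\v_i')$ lies on the circular orbit of radius $\parallel\r_i\parallel>r_c$, hence in $\mathcal{P}^+$. Apply the constant thrust $\boldsymbol{\tau}(t)\equiv\tau\,\hat{d}$ with $\hat{d}=(\v_i'-\v_i)/\parallel\v_i'-\v_i\parallel$ on a short interval $[0,\delta_\tau]$ on which the motor supplies the velocity increment $\v_i'-\v_i$; by Tsiolkovsky's relation this is achieved with $m(\delta_\tau)=m_i\,e^{-\beta\parallel\v_i'-\v_i\parallel}>0$ and burn time $\delta_\tau=O(1/\tau)\to0$ as $\tau\to\infty$. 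As $\tau\to\infty$ the endpoint $\Gamma(\delta_\tau,\boldsymbol{\tau},\x_i,m_i)$ converges to $(\r_i,\v_i')$ (the displacement and the gravitational velocity contribution are $O(\delta_\tau)$), while $\parallel\r(t)\parallel>r_c$ on $[0,\delta_\tau]$. Since $\mathcal{P}^+$ is open and $r_p$ is continuous there, for $\tau$ large the endpoint satisfies $r_p>r_c$; thus every sufficiently large $\tau$ belongs to $F$.

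For the lower bound $\tau_{\mathrm{max}}>0$ I would invoke continuous dependence on the control. As noted just before the lemma, as $\tau\to0$ the controlled trajectories converge, uniformly on $[0,t_p(\x_i)]$ and uniformly over all admissible controls, to the uncontrolled orbit $\gamma_{\x_i}$; since $r_p(\x_i)<r_c$, this orbit crosses $\parallel\r\parallel=r_c$ strictly inward before completing a period, while its perigee stays near $r_p(\x_i)<r_c$. Hence for $\tau$ small enough every controlled trajectory leaves $\mathcal{A}$ at some time at which it still has $r_p<r_c$, so it cannot reach $\mathcal{P}^+$ without first leaving $\mathcal{A}$. Therefore small $\tau\notin F$, giving $\tau_{\mathrm{max}}>0$, which already yields item 2) for $\tau<\tau_{\mathrm{max}}$.

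The main obstacle is the sharp endpoint of the dichotomy, namely that the infimum is not attained, $\tau_{\mathrm{max}}\notin F$; this is exactly what makes item 2) hold at $\tau=\tau_{\mathrm{max}}$ and not merely below it. Suppose, for contradiction, that $\tau_{\mathrm{max}}\in F$, witnessed by a control $\boldsymbol{\tau}^{*}$ with $\parallel\boldsymbol{\tau}^{*}\parallel\le\tau_{\mathrm{max}}$, a time $\bar t$, and a trajectory $(\x^{*},m^{*})$ with $\x^{*}(t)\in\mathcal{A}$ on $[0,\bar t]$ and $r_p(\x^{*}(\bar t))>r_c$. Because $[0,\bar t]$ is compact and $\mathcal{A}\subset\mathcal{P}$ is open, these strict inequalities hold with a uniform margin $\eta>0$, namely $\parallel\r^{*}(t)\parallel\ge r_c+\eta$, $\mathrm{dist}(\x^{*}(t),\partial\mathcal{P})\ge\eta$ and $r_p(\x^{*}(\bar t))\ge r_c+\eta$. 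Now scale the control, $\boldsymbol{\tau}^{\lambda}:=\lambda\,\boldsymbol{\tau}^{*}$ with $\lambda\in(0,1)$, which is admissible for the strictly smaller bound $\lambda\tau_{\mathrm{max}}$. By continuous dependence of the solutions of $\Sigma_{\mathrm{sat}}$ on the control (the right-hand side being locally Lipschitz away from $\r=0$ and $m=0$), the solutions remain defined on $[0,\bar t]$ and $(\x^{\lambda},m^{\lambda})\to(\x^{*},m^{*})$ uniformly on $[0,\bar t]$ as $\lambda\to1$. Hence for $\lambda$ close enough to $1$ the perturbed trajectory still satisfies $\x^{\lambda}(t)\in\mathcal{A}$ on $[0,\bar t]$, $m^{\lambda}(\bar t)>0$ and, by continuity of $r_p$, $r_p(\x^{\lambda}(\bar t))>r_c$. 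This gives $\lambda\tau_{\mathrm{max}}\in F$ with $\lambda\tau_{\mathrm{max}}<\tau_{\mathrm{max}}=\inf F$, a contradiction, so $F=(\tau_{\mathrm{max}},\infty)$ and both items follow. The delicate point throughout is precisely this openness step: lowering the thrust bound changes the entire trajectory, and it is only the combination of the strict admissibility margin with continuous dependence that lets a slightly weaker motor reproduce the insertion.
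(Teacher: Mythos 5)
Your proof is correct in substance, but it takes a genuinely different route from the paper in its key construction, and in one respect it is more careful than the paper itself. For the small-thrust obstruction you argue exactly as the paper does (the paper simply cites the remark preceding the lemma: as $\tau\to 0$ the controlled trajectories converge uniformly to $\gamma_{\x_i}$, whose perigee $r_p(\x_i)<r_c$ forces every trajectory out of $\mathcal{A}$ before the perigee distance can be raised); your reading of item 2) in terms of admissible trajectories is also the paper's intended one, since that remark phrases failure as loss of admissibility. For the large-thrust direction, however, the paper does not use your impulsive-limit burn: it writes down an explicit admissible spiral by prescribing the velocity field $\v(t)=\frac{C_0}{(2r(t))^{1/2}}\bigl(a_i\frac{\r(t)}{\parallel\r(t)\parallel}+b_i\frac{\r(t)^{\perp}}{\parallel\r(t)\parallel}\bigr)$ with $C_0=\parallel\r_i\parallel^{1/2}\parallel\v_i\parallel<\sqrt{2\mu}$, integrates it in polar coordinates to get $r(t)=(r_i^{3/2}+3C_0t/2)^{2/3}$ and $r_p(t)=C_1r(t)^{1/2}\to\infty$, and then defines the control a posteriori as $\boldsymbol{\tau}(t)=\bigl(\dot{\v}(t)+\mu\r(t)/r(t)^3\bigr)m(t)$, taking the bound $\bar\tau$ to be its supremum. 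What the paper's construction buys is that admissibility along the whole maneuver is immediate from the formulas: $r(t)\geq r_i>r_c$, $\h(t)=C_0(r(t)/2)^{1/2}\e_i\neq 0$, and $E(t)=(C_0^2-2\mu)/(2r(t))<0$. That is precisely where your burn argument has a (fixable) hole: membership in $\mathcal{A}$ requires not only $\parallel\r(t)\parallel>r_c$ but also $\x(t)\in\mathcal{P}$, i.e. $E<0$ and $\h\neq 0$ at every instant of the burn, and you never check these. The fix is routine: choose $\v_i'$ to be the \emph{prograde} circular velocity, so that the component of $\v(t)$ transversal to $\r_i$ never changes sign (keeping $\h\neq 0$), and note that the norm along a velocity segment is maximized at an endpoint, both endpoints being below escape speed, so $E<0$ with a margin that survives the $O(\delta_\tau)$ gravity correction. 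Finally, your threshold bookkeeping is genuinely stronger than the paper's: the paper never explains why the feasible set of bounds is an \emph{open} half-line, i.e. why item 2) holds at $\tau=\tau_{\mathrm{max}}$ itself and not merely below it (without this, no choice of threshold satisfies both items as stated); your scaling argument $\boldsymbol{\tau}^{\lambda}=\lambda\boldsymbol{\tau}^{*}$ combined with continuous dependence, showing $\inf F\notin F$, is exactly the missing step, and it is correct.
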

\begin{proof}{At the light of the remark preceding the lemma, it is enough to find a value of $\bar{\tau}$ and a control $\boldsymbol{\tau}(\cdot)$ taking values in  $\mathcal{T}(\bar{\tau})$
steering $\y_i=(\x_i,m_i)$ to some point in $\mathcal{P}^+\times\mathbb{R}^*_+$ along an admissible trajectory. 
We proceed as follows. Let $C_0=\Vert \r_i\Vert^{1/2} \Vert \v_i\Vert$ which belongs to $(0,{\sqrt{2\mu}})$. 
Choose now the function $\v(\cdot)$ defined on some time interval $[0,\bar{T}]$ (with $\bar{T}$
to be fixed later) as follows: $\v(0)=\v_i$, $\dot \r(t)=\v(t)$ and
$$
\v(t)=\frac{C_0}{(2\r(t))^{1/2}}\left(a_i\frac{\r(t)}{\Vert\r(t)\Vert}+b_i\frac{\r(t)^{\perp}}{\Vert\r(t)\Vert}\right), \ t\in [0,T].
$$
Here $\r(t)^{\perp}$ denotes a continuous choice of vector perpendicular to $\r(t)$ in the 2D plane spanned by $\r_i$ and $\v_i$. (Implicitely, we assume with no loss of generality that $\v_i\neq 0$ with $a_i>0$ and $b_i\neq 0$.) 
For simplicity, we assume next that $a_i=b_i=1$. The curve $(\r(\cdot),\v(\cdot))$ defined previously can be explicitely integrated using polar coordinates for $\r(\cdot)=r(\cdot)\exp(i\theta(\cdot))$. One gets that, on $[0,T]$, 
$$
\dot r(t)=r(t)\dot\theta(t)=\left(\frac{C_0}{2r(t)}\right)^{1/2}.
$$
After integration, one has for $t\in [0,T]$, 
$$
r(t)=(r_i^{3/2}+3C_0t/2)^{2/3}, \ \ \theta(t)=\theta_i+2\ln(r_i^{3/2}+3C_0t/2)/3.
$$
One also checks that $\h(t)=C_0(\frac{r(t)}2)^{1/2}\e_i$, with $\e_i$ a constant vector of unit norm parallel to $\r_i\times \v_i$ and ${\boldsymbol{L}}(t)=(C_0^2/2-\mu)\frac{\r(t)}{\Vert\r(t)\Vert}$. One deduces that 
$r_p(t)=C_1r(t)^{1/2}$ for some positive constant $C_1$. One thus fixes $\bar{T}$ so that 
$r_p(\bar{T})>r_c$. 
It remains to determine $\bar{\tau}$ so that $(\r(t),\v(t))$ is part of a controlled admissible trajectory of System $\Sigma_{\varepsilon}$. 
One first integrates over $[0,T]$ the differential equation 
$$
\dot m(t)=-\beta\Vert \dot {\v}(t)+\frac{\mu}{r(t)^3}\r(t)\Vert m(t),
$$
and then take $\boldsymbol{\tau}(t)=\left(\dot {\v}(t)+\frac{\mu}{r(t)^3}\r(t)\right)m(t)$ for $t\in [0,T]$. The final bound $\bar{\tau}$ is simply the maximum of $\Vert \boldsymbol{\tau}(t)\Vert$ over $[0,T]$.

}
\end{proof}

\noindent As a combination of Lemmas~\ref{LE:OIP1} and ~\ref{PR:OIP1}, we obtain the following result.
\begin{corollary}
For every $\beta > 0$, $\mu > 0$, and initial point $(\x_i,m_i)\in\mathcal{P}^-\times\mathbb{R}^*_+$, there exists a limiting value $\tau_{\mathrm{max}} > 0$ depending on $(\x_i,m_i)$ such that the following properties hold:
\begin{description}
\item[$1)$] if $\tau > \tau_{\mathrm{max}}$, the system $\Sigma_{\mathrm{sat}}$ is controllable for the OIP; and
\item[$2)$] if $\tau \leq \tau_{\mathrm{max}}$, the system $\Sigma_{\mathrm{sat}}$ is not controllable for the OIP.
\end{description}
\label{PR:specific1}
\end{corollary}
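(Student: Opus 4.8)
The plan is to derive this corollary purely as a combination of the two preceding lemmas, with no new analysis required: Lemma~\ref{PR:OIP1} already isolates a sharp thrust threshold $\tau_{\mathrm{max}}$, depending on the fixed data $(\x_i,m_i)$, governing whether the state can be admissibly driven out of the unstable region $\mathcal{P}^-$ into the stable region $\mathcal{P}^+$, while Lemma~\ref{LE:OIP1} converts the mere ability to reach $\mathcal{P}^+$ into full OIP controllability. Accordingly, I would fix $\beta,\mu>0$ and $(\x_i,m_i)\in\mathcal{P}^-\times\mathbb{R}^*_+$ and simply declare $\tau_{\mathrm{max}}$ to be the limiting value supplied by Lemma~\ref{PR:OIP1}, which has exactly the required dependence on $(\x_i,m_i)$.

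For the sufficiency claim (item 1), I would suppose $\tau>\tau_{\mathrm{max}}$ and invoke item 1) of Lemma~\ref{PR:OIP1} to obtain a control $\boldsymbol{\tau}(\cdot)$ valued in $\mathcal{T}(\tau)$ and a positive time $\bar t\in\mathcal{I}_{\Gamma}$ whose controlled trajectory stays in $\mathcal{A}$, keeps the mass positive, and terminates with $r_p(\x(\bar t))>r_c$, i.e.\ in $\mathcal{P}^+$. This is precisely the hypothesis of Lemma~\ref{LE:OIP1}, so I would apply that lemma directly to conclude that $\Sigma_{\mathrm{sat}}$ is controllable for the OIP from $(\x_i,m_i)$; internally this amounts to concatenating the admissible transfer into $\mathcal{P}^+$ with the intra-$\mathcal{P}^+$ steering guaranteed by Proposition~\ref{PR:Controllability_constant_mass} and transported to $\Sigma_{\mathrm{sat}}$ via Lemma~\ref{LE:controllability_mass_varying}, thereby reaching an arbitrary $\x_f\in\mathcal{P}^+$ while holding the mass above a positive floor $M_0$.

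For the necessity claim (item 2), I would suppose $\tau\le\tau_{\mathrm{max}}$ and invoke item 2) of Lemma~\ref{PR:OIP1}, by which no control valued in $\mathcal{T}(\tau)$ produces a trajectory that ever enters $\mathcal{P}^+\times\mathbb{R}^*_+$. Since every admissible OIP target $\x_f$ lies in $\mathcal{P}^+$, reaching it along an admissible controlled trajectory would in particular carry the state into $\mathcal{P}^+\times\mathbb{R}^*_+$; as this is impossible, $\Sigma_{\mathrm{sat}}$ cannot be controllable for the OIP from $(\x_i,m_i)$.

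Since both lemmas already carry all the analytic weight, I do not expect a genuine obstacle: the only point requiring care is the bookkeeping of quantifiers, namely that a single threshold $\tau_{\mathrm{max}}=\tau_{\mathrm{max}}(\x_i,m_i)$ serves simultaneously as the sufficiency cutoff (through Lemma~\ref{LE:OIP1}) and as the necessity cutoff (directly from item 2 of Lemma~\ref{PR:OIP1}). What the pairing of the two lemmas certifies, and all the corollary asserts, is that \emph{reaching $\mathcal{P}^+$ admissibly} is both necessary and sufficient for OIP controllability from $(\x_i,m_i)$.
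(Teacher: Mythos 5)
Your proposal is correct and matches the paper's own route exactly: the paper derives this corollary precisely as ``a combination of Lemmas~\ref{LE:OIP1} and~\ref{PR:OIP1}'', taking $\tau_{\mathrm{max}}$ from Lemma~\ref{PR:OIP1}, using its item 1) together with Lemma~\ref{LE:OIP1} (which itself rests on Proposition~\ref{PR:Controllability_constant_mass} and Lemma~\ref{LE:controllability_mass_varying}) for sufficiency, and its item 2) for necessity. Your write-up merely makes explicit the quantifier bookkeeping the paper leaves implicit, so nothing further is needed.
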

\noindent The limiting value $\tau_{\mathrm{max}}$ can be computed by combining a shooting method and a bisection method as described in Section~\ref{SE:Numerical}.

\subsection{Controllability for DOP}\label{SE:P+_P-}

Let us define a system $\tilde{\Sigma}_{\mathrm{sat}}$ associated to $\Sigma_{\mathrm{sat}}$ as
\begin{eqnarray}
\tilde{\Sigma}_{\mathrm{sat}}:
\begin{cases}
\dot{\r}(t) = \v(t),\\
\dot{\v}(t) = -\frac{\mu}{\parallel \r(t)\parallel^3}\r(t) + \frac{\boldsymbol{\tau}(t)}{m(t)},\\
\dot{m}(t) = + \beta \parallel \boldsymbol{\tau}(t)\parallel,
\end{cases}
\label{EQ:tilde_Sigma}
\end{eqnarray}
where all variables are the same as defined in Eq.~(\ref{EQ:Sigma}). For every $\y_i\in\tilde{\mathcal{X}}\times\mathbb{R}^*_+$ and measurable control function $\boldsymbol{\tau}$ taking values in $\mathcal{T}(\tau_{\mathrm{max}})$ with $\tau_{\mathrm{max}}$,  we define by $\tilde{\Gamma}(t,\boldsymbol{\tau}(t),\y_i)$ the corresponding trajectory of $\tilde{\Sigma}_{\mathrm{sat}}$ for some positive times. 

\begin{remark}
For every controlled trajectory $(\r(t),\v(t),m(t))=\Gamma(t,\boldsymbol{\tau}(t),\y_i)$ of the system $\Sigma_{\mathrm{sat}}$ on some finite intervals $[0,t_f]\subset{\mathcal{I}_{\Gamma}}$ with $
(\r_f,\v_f,m_f)=\Gamma(t_f,\boldsymbol{\tau}(t_f),\y_i),$
the trajectory  $(\tilde{\r}(t),\tilde{\v}(t),\tilde{m}(t))=\tilde{\Gamma}(t,\boldsymbol{\tau}(t_f -t),\r_f,-\v_f,m_f)$ of $\tilde{\Sigma}_{\mathrm{sat}}$ runs backward in time along the trajectory $\Gamma(t,\boldsymbol{\tau}(t),\y_i)$ on $[0,t_f]$, i.e.,
\begin{eqnarray}
(\tilde{\r}(t),\tilde{\v}(t),\tilde{m}(t)) = (\r(t_f - t),-\v(t_f-t),m(t_f-t))\ \text{on}\ [0,t_f].
\label{EQ:tilde_Sigma=Sigma}
\end{eqnarray}
\end{remark}
%
%

\noindent As a consequence, according to Lemma~\ref{PR:OIP1} and Corollary~\ref{PR:specific1}, we obtain the following result.
\begin{corollary}
For each $\beta > 0$, $\mu > 0$, and $m_i > 0$, given a point $(\r_f,\v_f)\in\mathcal{P}^-$, there exists a $\tau_{\mathrm{max}} > 0$ depending on $(\r_f,\v_f)$ and $m_i$ such that the following properties hold:
\begin{description}
\item[$1)$] if $\tau > \tau_{\mathrm{max}}$, the system $\Sigma_{\mathrm{sat}}$ is controllable for the corresponding DOP; and

\item[$2)$]  if $\tau\leq\tau_{\mathrm{max}}$, the system $\Sigma_{\mathrm{sat}}$ is not controllable for the corresponding DOP.
\end{description}
\label{CO:DOP}
\end{corollary}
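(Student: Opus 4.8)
The plan is to reduce the DOP to the OIP already analysed, by exploiting the time-reversal identity~(\ref{EQ:tilde_Sigma=Sigma}). That identity shows that an admissible forward trajectory $(\x(t),m(t))=\Gamma(t,\boldsymbol{\tau},\x_i,m_i)$ of $\Sigma_{\mathrm{sat}}$ steering some $\x_i\in\mathcal{P}^+$ to $\x(t_f)=(\r_f,\v_f)\in\mathcal{P}^-$ corresponds, under $t\mapsto t_f-t$ and $\v\mapsto-\v$, to a trajectory of the auxiliary system $\tilde{\Sigma}_{\mathrm{sat}}$ issued from $(\r_f,-\v_f)$ and reaching $(\r_i,-\v_i)$. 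Hence controllability of $\Sigma_{\mathrm{sat}}$ for the DOP with target $(\r_f,\v_f)$ is equivalent to controllability of $\tilde{\Sigma}_{\mathrm{sat}}$ for the OIP issued from $(\r_f,-\v_f)$, and it suffices to transport Lemma~\ref{PR:OIP1} and Corollary~\ref{PR:specific1} to $\tilde{\Sigma}_{\mathrm{sat}}$.

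First I would verify that the regions $\mathcal{P}^+$ and $\mathcal{P}^-$ are invariant under the velocity flip $(\r,\v)\mapsto(\r,-\v)$. This map sends $\h=\r\times\v$ to $-\h$, so $\|\h\|$ is unchanged; moreover $\v\times\h$ is invariant (both factors change sign), hence so is the Laplace vector $\boldsymbol{L}=\v\times\h-\mu\r/\|\r\|$ and the eccentricity $e=\|\boldsymbol{L}\|/\mu$. Since the energy $E$ manifestly depends only on $\|\v\|$ and $\|\r\|$, the perigee and apogee distances~(\ref{EQ:r_min})--(\ref{EQ:r_max}) are preserved. In particular $(\r_f,-\v_f)\in\mathcal{P}^-$, so the reversed problem is genuinely an OIP with the same target region $\mathcal{P}^+$.

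The key observation is that $\tilde{\Sigma}_{\mathrm{sat}}$ and $\Sigma_{\mathrm{sat}}$ have identical position--velocity dynamics and differ only in the sign of $\dot m$. Therefore the $(\r,\v)$--reachability analysis underlying Lemma~\ref{PR:OIP1}---the explicit steering curve whose perigee is driven above $r_c$, together with the thrust bound obtained by setting $\boldsymbol{\tau}=(\dot\v+\mu\r/\|\r\|^3)m$---applies verbatim to $\tilde{\Sigma}_{\mathrm{sat}}$, the sole change being that the mass is integrated with the opposite sign. Because the mass now grows along $\tilde{\Sigma}_{\mathrm{sat}}$, the admissibility requirement $m\ge M_0$ is automatic, and the only binding constraint is $\|\r(t)\|>r_c$, exactly as in the OIP. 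Applying Corollary~\ref{PR:specific1} to $\tilde{\Sigma}_{\mathrm{sat}}$ issued from $(\r_f,-\v_f)$ then yields a threshold $\tau_{\mathrm{max}}>0$ separating controllability from non-controllability, which transports back through~(\ref{EQ:tilde_Sigma=Sigma}) to the two alternatives claimed for the DOP.

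The step I expect to be most delicate is the bookkeeping of the mass through the time reversal. Along the DOP the mass decreases from its prescribed value $m_i$ at the start, while the thrust magnitude $\|\boldsymbol{\tau}\|$ equals the mass-independent acceleration magnitude times $m$, hence is largest where the mass is largest, namely near $\x_i$; in the reversed picture this is the terminal mass of $\tilde{\Sigma}_{\mathrm{sat}}$. To make the dependence of the threshold on $m_i$ transparent I would instead route the argument through the acceleration system $\Sigma_{\varepsilon}$: the velocity-flip time reversal maps $\Sigma_{\varepsilon}$ trajectories to $\Sigma_{\varepsilon}$ trajectories, so the DOP for $\Sigma_{\varepsilon}$ is exactly an OIP for $\Sigma_{\varepsilon}$ from $(\r_f,-\v_f)$, governed by a purely geometric acceleration threshold $\varepsilon^{*}(\r_f,\v_f)$; Lemma~\ref{LE:controllability_mass_varying} then lifts this to $\Sigma_{\mathrm{sat}}$ with $\tau_{\mathrm{max}}=\varepsilon^{*}m_i$, giving precisely the stated dependence on $(\r_f,\v_f)$ and $m_i$. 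The non-controllability direction requires no new work: any admissible DOP trajectory below the threshold would reverse to an admissible trajectory contradicting part~2 of Corollary~\ref{PR:specific1}.
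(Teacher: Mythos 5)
Your reduction is exactly the paper's: Corollary~\ref{CO:DOP} is presented there as an immediate consequence of the time-reversal identity~(\ref{EQ:tilde_Sigma=Sigma}) together with Lemma~\ref{PR:OIP1} and Corollary~\ref{PR:specific1}, and your verification that the velocity flip $(\r,\v)\mapsto(\r,-\v)$ preserves $\|\h\|$, $\boldsymbol{L}$, $E$, hence $r_p$, $r_a$ and the sets $\mathcal{P}^{\pm}$, fills in a check the paper leaves implicit. The controllable direction (part~1) of your argument is sound: a $\Sigma_{\varepsilon}$-solution of the reversed OIP issued from $(\r_f,-\v_f)$, concatenated with OTP controllability inside $\mathcal{P}^+$ (Proposition~\ref{PR:Controllability_constant_mass}) and reversed again, lifts to $\Sigma_{\mathrm{sat}}$ via Lemma~\ref{LE:controllability_mass_varying} with thrust bound $\varepsilon m_i$, because along the forward trajectory the mass never exceeds $m_i$.

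The gap is in part~2, precisely in the mass bookkeeping you yourself flagged as delicate. Along a forward DOP trajectory the mass decreases strictly below $m_i$ wherever thrust is applied, so the thrust bound $\|\boldsymbol{\tau}(t)\|\leq\tau$ yields only $\|\u(t)\|=\|\boldsymbol{\tau}(t)\|/m(t)\leq \tau/m(t)$, which can exceed $\tau/m_i=\varepsilon^{*}$: the inequality goes the wrong way. Hence an admissible DOP trajectory with $\tau\leq\varepsilon^{*}m_i$ does \emph{not} induce a $\Sigma_{\varepsilon}$-trajectory with $\varepsilon\leq\varepsilon^{*}$, and no contradiction with the acceleration threshold follows. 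The same defect appears in your direct reversal argument: the reversed trajectory is a trajectory of $\tilde{\Sigma}_{\mathrm{sat}}$, not of $\Sigma_{\mathrm{sat}}$, and its initial mass is the free terminal mass $m_f$ of the DOP, not the prescribed $m_i$; Corollary~\ref{PR:specific1}, part~2, is stated for $\Sigma_{\mathrm{sat}}$ with a fixed initial mass, so it cannot be invoked ``with no new work.'' Worse, since $m_f$ is constrained only by $0<m_f\leq m_i$, the acceleration $\tau/m_f$ available near the target point is unbounded as $m_f\to 0$, so the mechanism that makes OIP non-controllability work --- small thrust forces the trajectory to stay near the Keplerian orbit that dips below $r_c$ (the remark preceding Lemma~\ref{PR:OIP1}), valid there because the mass at the dangerous end is pinned at $m_i$ --- is simply not available here. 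To close part~2 you must control the mass at the $\mathcal{P}^-$ end of the trajectory, e.g.\ by carrying a dry-mass bound $m(t)\geq M_0$ through the reversal and proving the non-controllability statement for $\tilde{\Sigma}_{\mathrm{sat}}$ under that mass normalization (this is what the paper's OCP for DOP does implicitly when it fixes the terminal mass of $\tilde{\Sigma}_{\mathrm{sat}}$ at $m_i$); as written, your argument for the second alternative does not go through.
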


\section{{Numerical Examples}}\label{SE:Numerical}

In this section, we consider two numerical examples, one OIP and one DOP, to  compute the limiting value $\tau_{\mathrm{max}}$ in Corollaries \ref{PR:specific1} and \ref{CO:DOP}, respectively. The gravitational constant $\mu$ in system $\Sigma_{\mathrm{sat}}$ (and/or $\tilde{\Sigma}_{\mathrm{sat}}$) is $3986000.47$ Km$^3/$s$^2$, the radius of the Earth is $r_e = 6,374,000$ m, and we consider the vertical depth of atmosphere around the Earth is $90,000$ m, which means $r_c = r_e + 90,000$ m.

\subsection{A numerical example for OIP}

In order to be able to compute the limiting value $\tau_{\mathrm{max}}$ in Corollary \ref{PR:specific1}, we first define the following optimal control problem.
\begin{definition}[{\it Optimal control problem (OCP) for OIP} ]
Given every initial point $(\x_i,m_i)\in\mathcal{P}^-\times\mathbb{R}^*_+$ and $\tau>0$, the optimal control problem for OIP consists of  to steering a satellite by $\boldsymbol{\tau}(\cdot)\in\mathcal{T}(\tau)$ on a time interval $[0,t_f]\subset\mathcal{I}_{\Gamma}$ along System $\Sigma_{\mathrm{sat}}$ such that, along the controlled trajectory $(\r(t),\v(t),m(t))=\Gamma(t,\boldsymbol{\tau}(t),\x_i,m_i)$, the time $t_f$ is the first occurence for $\parallel\r(t_f) \parallel = r_p(\r(t_f),\v(t_f))$, i.e., $\parallel\r(t) \parallel > r_p(\r(t),\v(t))$ on $[0,t_f)$, and that $r_p(\r(t_f),\v(t_f))$  is maximized, i.e., the cost functional is
\begin{eqnarray}
J = \int_0^{t_f} \frac{d}{dt}r_p(\r(t),\v(t))dt.
\label{EQ:controllability_cost}
\end{eqnarray}
\end{definition}
\noindent Let $\tilde{t}_f > 0$ be the optimal final time of the OCP for OIP, and let $(\tilde{\x}(t),\tilde{m}(t)) = \Gamma(t,\tilde{\boldsymbol{\tau}}(t),\x_i,m_i)$ on $[0,\tilde{t}_f]$ be the optimal controlled trajectory with the associated optimal control $\tilde{\boldsymbol{\tau}}(t)\in\mathcal{T}(\tau)$ on $[0,\tilde{t}_f]$. One can check, by using Pontryagin Maximum Principle as was done in Ref.\ \cite{Caillau:01}, that $\parallel \tilde{\boldsymbol{\tau}}(t)\parallel = \tau$ on the whole interval $[0,\tilde{t}_f]$. Thus, fixing the initial point $(\x_i,m_i)\in\mathcal{P}^-\times\mathbb{R}^*_+$,  we have that the final time $\tilde{t}_f$, the trajectory $(\tilde{\x}(t),\tilde{m}(t))$ at each time $t\in[0,\tilde{t}_f]$, and the final perigee distance $r_p(\tilde{\x}(\tilde{t}_f))$ are functions of $\tau$. Thus, let us define a function 
\begin{eqnarray}
s:\mathbb{R}_+ \rightarrow \mathbb{R}, s(\tau) = r_p(\tilde{\x}(\tilde{t}_f)) - r_c.
\label{EQ:shooting_OIP}
\end{eqnarray}
If one can find ${\tau}_{\mathrm{max}} > 0$ such that $s({\tau}_{\mathrm{max}}) =0$, then $\tau_{\mathrm{max}}$ is the limiting value in Corollary \ref{PR:specific1}. For every $\tau> 0$, using a shooting method as an inner loop to solve the OCP for OIP, we can obtain a value for $s(\tau)$. Then, using a bisection method as an outer loop, one can obtain ${\tau}_{\mathrm{max}}>0$ such that $s({\tau}_{\mathrm{max}}) = 0$.
According to Eq.(\ref{EQ:r_max}) and the objective of the OCP for OIP, place a satellite with the initial mass $m_i>0$ on a point $(\r_i,\v_i)\in\mathcal{P}^-$. The optimal controlled trajectory lies on a 2-dimensional plane spanned by $\r_i$ and $\v_i$. Hence, the limiting value $\tau_{\mathrm{max}}$ in Corollary \ref{PR:specific1} is determined only by $\parallel \r_i \parallel$, $\parallel \v_i \parallel$, and by the flight path angle $\eta_i \in [-\pi/2,\pi/2]$,  i.e., the angle between the velocity vector $\v_i$ and local horizontal plane, defined by $$\eta _i= \sin^{-1}\left(\frac{\r_i^T \cdot \v_i}{\parallel \r_i \parallel \parallel \v_i \parallel}\right).$$
Assume that a rocket carries a satellite, whose initial mass is $m_i =150$ $kg$, from the surface of the Earth to a point $\x_i=(\r_i,\v_i)$ in the unstable region $\mathcal{P}^-$ such that $\parallel \r_i \parallel = r_e + 110,000$ $m$, $\parallel \v_i \parallel =  7879.5$ $m/s$, and  $\eta_i =5^{\circ}$. The rocket and the satellite are separated at this point $\x_i$. Then, the satellite has to use its own engine to steer itself from the point $\x_i$ into the stable region $\mathcal{P}^+$.  We can see from Figure \ref{Fig:trajectory_inserting} that the periodic orbit $\gamma_{\x_i}$ has collisions with the surface of the atmosphere around the Earth.
\begin{figure}[!ht]
\centering
\includegraphics[trim=3.2cm 7.2cm 3.2cm 7.2cm, clip=true, width=3.0in, angle=0]{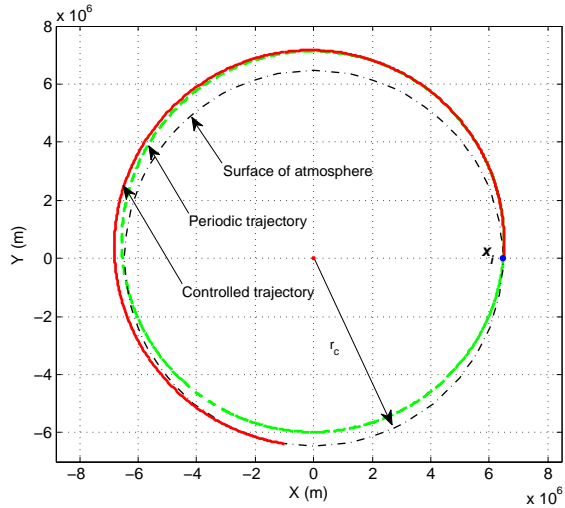}
\caption{The periodic orbital $\gamma_{\x_i}$ and the optimal controlled trajectory of the OCP for the OIP with $\tau = \tau_{\mathrm{max}}$ starting from $\x_i$.}
\label{Fig:trajectory_inserting}
\end{figure}

\begin{figure}[!ht]
\centering
\includegraphics[trim=3.4cm 8.3cm 3.3cm 8.3cm, clip=true, width=3.0in, angle=0]{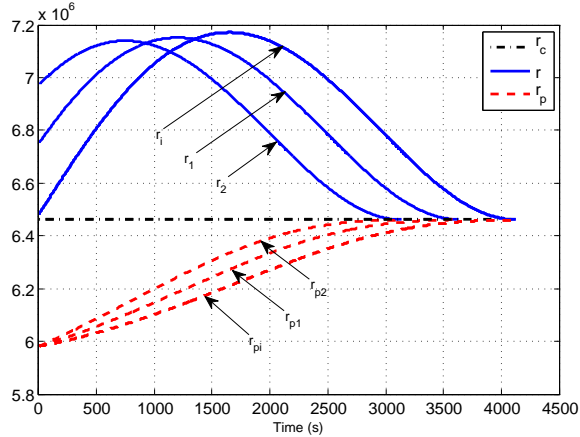}
\caption{The profile of $\parallel \r \parallel $ and $r_{p}$ with respect to time for 3 different initial points, i.e., $\x_i$ and $\x_j$ ($j= 1,2$), for OIP; The subscripts, $i$, $1$, and $2$, correspond to the initial points $\x_i$, $\x_1$, and $\x_2$, respectively.}
\label{Fig:r_rm_inserting}
\end{figure}

We choose the specific impulse of the engine fixed on the satellite as $I_\mathrm{sp}
= 2000$ s, which implies that $\beta = \frac{1}{I_\mathrm{sp}g_0} = 5.102\times 10^{-5}$ m$^{-2}$ where $g_0 = 9.8$ m$^2/$s.  The computed result of the limiting value is $\tau_{\mathrm{max}} = 8.052$ $N$. Thus, in order to be able to insert the satellite from the point $\x_i$ into stable region $\mathcal{P}^+$, the maximum thrust  of the engine has to be larger than $8.052$ $N$. The optimal controlled trajectory of the corresponding OCP for OIP with $\tau = 8.052$  is  plotted in Figure \ref{Fig:trajectory_inserting} as well.
To see the numerical results for different initial points, another two points $\x_1=(\r_1,\v_1)$ and $\x_2=(\r_2,\v_2)$ are chosen on the periodic orbit $\gamma_{\x_i}$ such that 
$$\parallel \r_1 \parallel = r_e + 379,494\ m,\ \parallel \v_1 \parallel = 7,562\ m/s,\ \eta_1 = 4.3517^\circ,$$
$$\parallel \r_2 \parallel = r_e+599,351\ m,\ \parallel \v_2 \parallel = 7,312\ m/s,\ \eta_2 = 3.0132^\circ.$$
Then, the limiting value of $\tau_{\mathrm{max}}$ corresponding to the two initial points $\x_j$, $j=1,2$, are computed as $9.037$ $N$ and $10.719$ $N$, respectively. We see that the limiting values $\tau_{\mathrm{max}}$ are different for different initial points on the same periodic orbit. The time history of radius $\parallel \r(t) \parallel$ and perigee distance $r_{p}(\x(t))$ along the optimal controlled trajectories starting from the initial points $\x_i$ and $\x_j$, $j=1,2$, are plotted in Figure \ref{Fig:r_rm_inserting}. Since the three points $\x_i$ and $\x_j$, $j=1,2$ lie on the same periodic orbit $\gamma_{\x_i}$, $r_p$ at initial time is the same, as shown in Figure  \ref{Fig:r_rm_inserting}.

\subsection{A numerical example for DOP}

A DOP is a powered flight phase of a satellite in the region $\mathcal{A}\times\mathbb{R}^*_+$, during which a decelerating manoeuvre is performed so that the satellite will move to the desired final point $\x_f = (\r_f,\v_f)\in\mathcal{P}^-$ at the entry interface (EI). The condition at EI permits the satellite to have a subsequent safe entry flight in atmosphere to a landing site. A typical condition at EI, see Ref.\ \cite{Baldwin:12}, is given as:
\begin{eqnarray}
\parallel \r_f \parallel = r_{EI},\ \ \parallel \v_f \parallel = V_{EI},\ \ \text{and}\ \ \r_f^T \cdot \v_f = V_{EI} r_{EI} sin(\eta_{EI}),
\label{EQ:DOP_EI_Condition}
\end{eqnarray}
where $r_{EI} = r_e + 122,000$ m, $V_{EI}=7879.5\ m/s$, and $\eta_{EI} = -15^\circ$ denote the norm of position vector, the norm of velocity vector, and the flight path angle at EI, respectively.
In order to compute the limiting value $\tau_{\mathrm{max}}$ in {\it Corollary \ref{CO:DOP}} for the DOP to a point $(\r_f,\v_f)$ in $\mathcal{P}^-$, we first define the below optimal control problem.
\begin{definition}[{\it Optimal control problem (OCP) for DOP} ]
Given every final point $\x_f=(\r_f,\v_f)\in\mathcal{P}^-$ and $\tau > 0$, let $m_i>0$ be the initial mass of a satellite, the optimal control problem for DOP consists of steering the satellite by $\boldsymbol{\tau}(\cdot)\in\mathcal{T}(\tau)$ on a time interval $[0,t_f]\subset\mathcal{I}_{\Gamma}$ subject to the system $\tilde{\Sigma}_{\mathrm{sat}}$ such that, along the controlled trajectory $(\r(t),\v(t),m(t)) = \tilde{\Gamma}(t,\boldsymbol{\tau}(t),\r_f,-\v_f,m_f)$ ($m_f > 0$ is free) of System $\tilde{\Sigma}_{\mathrm{sat}}$, the time $t_f$ is the first occurence for $\parallel\r(t_f) \parallel = r_p(\r(t_f),\v(t_f))$, $m_i = m(t_f)$, and $r_p(\r(t_f),\v(t_f))$  is maximized, i.e., the cost functional is the same as Eq.(\ref{EQ:controllability_cost}).
\end{definition}

\noindent Given every initial mass $m_i>0$ and final point $(\r_f,\v_f)$ in $\mathcal{P}^-$, let $\bar{t}_f>0$ be the optimal final time of the OCP for DOP, and let $(\bar{\x}(t),\bar{m}(t)) = \tilde{\Gamma}(t,\bar{\boldsymbol{\tau}}(t),\r_f,-\v_f,m_f)$ be the optimal controlled trajectory associated to the control $\bar{\boldsymbol{\tau}}(t)\in\mathcal{T}(\tau)$ on $[0,\bar{t}_f]$. Then, the same as the OCP for OIP, the perigee distance $r_p(\bar{\x}(\bar{t}_f))$ is a function of $\tau$. Let us define a function
\begin{eqnarray}
\bar{s}:\mathbb{R}_+ \rightarrow \mathbb{R},\ \bar{s}(\tau) = r_p(\bar{\x}(\bar{t}_f)) - r_c.
\label{EQ:shooting_DOP}
\end{eqnarray}
Then, according to Eq.~(\ref{EQ:tilde_Sigma=Sigma}), in order to compute the limiting value $\tau_{\mathrm{max}}$ in {\it Corollary \ref{CO:DOP}}, it suffices to combine a shooting method and a bisection method to compute the value $\tau_{\mathrm{max}}$ such that $\bar{s}(\tau_{\mathrm{max}}) = 0$.

What we developed in this paper is applicable not only for low-thrust control systems but also for high-thrust control systems if only the thrust is finite instead of impulsive. Thus, we consider the space shuttle's parameters in Refs.\ \cite{Joosten:85,Brand:73}. The initial mass is $95,254.38$ $kg$. The specific impulse of the engine is 313 s that means $\beta = 3.26 \times 10^{-4}$. The numerical result is $\tau_{\mathrm{max}} = 14,004.62\ N$. Note that the propulsion for a space shuttle is provided by the orbital manoeuvring system (OMS) engines, which produce a total vacuum thrust of $53,378.6\ N$, see Refs.\ \cite{Joosten:85,Brand:73}. Thus, according to {\it Lemma \ref{LE:OIP1}}, for every initial point $\x_i$ in $\mathcal{P}^+$, the space shuttle can reach the EI condition in Eq.(\ref{EQ:DOP_EI_Condition}) by admissible controlled trajectories of the system $\Sigma_{\mathrm{sat}}$ if the satellite takes enough fuel. 
The periodic trajectory $\gamma_{\x_f}$  and associated optimal controlled trajectory with $\tau = \tau_{\mathrm{max}}$ are illustrated in Figure \ref{Fig:trajectory}. 
 \begin{figure}[!ht]
\centering
\includegraphics[trim=3.8cm 8.2cm 3.8cm 8.2cm, clip=true, width=3.0in, angle=0]{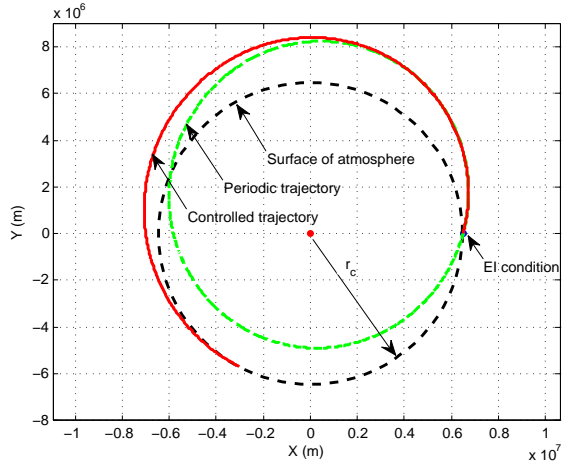}
\caption{The periodic trajectory $\gamma_{\x_f}$ determined by the EI condition in Eq.(\ref{EQ:DOP_EI_Condition}) and the optimal controlled trajectory of the OCP for DOP with $\tau = \tau_{\mathrm{max}}$.}
\label{Fig:trajectory}
\end{figure}
The profile of $\parallel \r \parallel$ and $r_p$ along optimal controlled trajectories  for the DOP with $\tau = {\tau}_{\mathrm{max}}$, ${\tau}_{\mathrm{max}} + 100\ N$, and ${\tau}_{\mathrm{max}} - 100\ N$, are illustrated in Figure \ref{Fig:DOP_var}.
 We can see from Figure \ref{Fig:DOP_var} that the optimal controlled trajectory of the OCP for DOP with $\tau = \tau_{\mathrm{max}} + 100$ N is an admissible controlled trajectory in $\mathcal{A}$ and the final point lies in $\mathcal{P}^+$. While, the optimal controlled trajectory of the OCP for DOP with  $\tau = \tau_{\mathrm{max}} - 100$ N cannot reach a point in $\mathcal{P}^+$ by admissible controlled trajectories of the system $\tilde{\Sigma}_{\mathrm{sat}}$. 
 \begin{figure}[!ht]
\centering
\includegraphics[trim=3.2cm 8.0cm 3.0cm 8.0cm, clip=true, width=3.0in, angle=0]{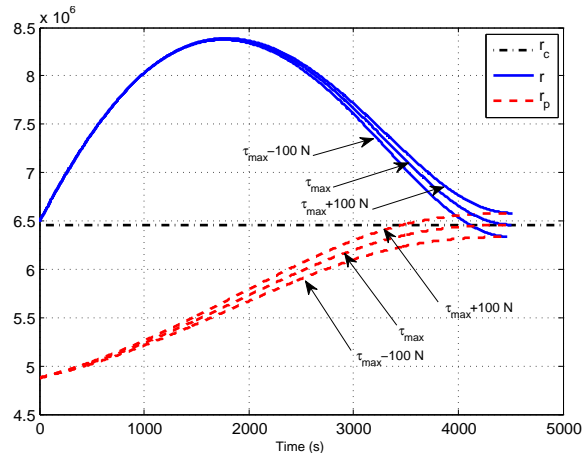}
\caption{The profile of $r=\parallel \r \parallel$ and $r_p$ along the optimal controlled trajectory of the OCP for the DOP with $\tau = {\tau}_{\mathrm{max}}$, ${\tau}_{\mathrm{max}} + 100$ N, and ${\tau}_{\mathrm{max}} - 100$ N.}
\label{Fig:DOP_var}
\end{figure}

\section{Conclusion}\label{SE:Conclusion}

The controllability property of the Keplerian motion around the Earth in the periodic
region $\mathcal{P}$ is established in this paper.  According to the state constraint
that the radius of the Keplerian motion has to be larger than the radius of the
surface of atmosphere around the Earth, the periodic region is separated into two
sets: $\mathcal{P}^+$ and $\mathcal{P}^-$. The controlled motion in the set
$\mathcal{P}^+$ is the typical OTP and we obtain that the motion is controllable in
the set $\mathcal{P}^+$ for any positive maximum thrust. Moreover, we obtain that
there exists a limiting value of $\tau_{\mathrm{max}}>0$ depending on initial point
(final point, respectively) such that the Keplerian motions for OIP  (DOP,
respectively) is controllable if $\tau>\tau_{\mathrm{max}}$. Finally, two numerical
examples are simulated to show that a shooting method and a bisection method can be
combined to compute the limiting value for the bound on the thrust.

\section{Appendix}
In this section, we provide two sets of coordinates for points in the periodic region $\mathcal{P}$
(see Ref.\ \cite{CB,Zarrouati:87} for all the results given here).
\begin{definition}[{\it Classical orbital elements (COE)}]\label{coe}
For $\x\in\mathcal{P}$, define the following functions:
\begin{eqnarray}
a(\x)&=&  -\frac{\mu}{2E},\\
i(\x)&=& \cos^{-1}\frac{\parallel \h^T\cdot \1_z\parallel }{\parallel \h \parallel \parallel \1_z \parallel},\\
\omega(\x)& =& \cos^{-1}\frac{\parallel \boldsymbol{L}^T\cdot \n\parallel }{\parallel \boldsymbol{L}\parallel \parallel \n \parallel},\\
\Omega(\x)&=& \cos^{-1}\frac{\parallel \1_x^T\cdot \n \parallel}{\parallel \1_x \parallel \parallel \n \parallel},
\end{eqnarray}
where $\1_x=[1,0,0]^T$, $\n = \1_z\times\h$ with $\1_z=[0,0,1]^T$. The quantity $a(\x)$
is called the semi-major axis of the orbit $\gamma_{\x}$ whose shape is thus determined by $a(\x)$ and $e(\x)$. The angles $i(\x)$, $\omega(\x)$ and $\Omega(\x)$ are called the inclination of the orbit $\gamma_{\x}$,   the argument of perigee of the orbit $\gamma_{\x}$ and the right ascension of the ascending node of the orbit $\gamma_{\x}$ respectively.
Then, the variables $(a(\x),e(\x),i(\x),\omega(\x),\Omega(\x),\theta(\x))$ are called the classical orbital elements of the orbit $\gamma_{\x}$.
\end{definition}

\noindent (Note that the set of COEs is singular if $e = 0$ and $i = 0,\pi$.)

\begin{definition}[{\it Modified equinoctial orbital elements (MEOE)}]\label{meoe}
For $\x\in\mathcal{P}$, define the following functions:
\begin{eqnarray}
P(\x) &=& a(\x)(1 - e(\x)^2)/\mu,\\
e_x(\x) &=& e(\x)\cos(\omega(\x)+\Omega(\x)),\\
e_y(\x) &=& e(\x)\sin(\omega(\x) + \Omega(\x)),\\
h_x(\x) &=& \tan(i(\x)/2)\cos(\Omega(\x)),\\
h_y(\x) &=& \tan(i(\x)/2)\sin(\Omega(\x)),\\
l(\x) &=& \omega(\x) + \Omega(\x) + \theta(\x),
\end{eqnarray}
where $(a(\x),e(\x),i(\x),\omega(\x),\Omega(\x),\theta(\x))$ are the COE defined previously.
Then the $6$-tuple $\z=(P,e_x,e_y,h_x,h_y,l)\in\mathbb{R}^5\times\mathbb{S}$ gathers the so-called modified equinoctial orbit elements (MEOE), Moreover, we also have that
\begin{eqnarray}
\r &=& \frac{P}{CW}\left[
\begin{array}{c}
(1+h_x^2 - h_y^2)\cos l + 2 h_x h_y \sin l\\
(1 -h_x^2 + h_y^2)\sin l + 2 h_x h_y \cos l\\
2 h_x \sin l - 2 h_y \cos l
\end{array}\right],\label{EQ:r1}\\
\v &=&\frac{\sqrt{\mu/P}}{C} \left[
\begin{array}{c}
2 h_x h_y (e_x + \cos l) - (1 + h_x^2 - h_y^2)(e_y + \sin l)\\
-2 h_x h_y (e_y + \sin l) + (1 - h_x^2 + h_y^2)(e_x + \cos l)\\
2 h_x (e_x + \cos l) + 2 h_y (e_y + \sin l)
\end{array}\right],
\end{eqnarray}
where $C = 1 + h_x^2 + h_y^2$ and $W = 1 + e_x \cos l + e_y \sin l$. Note that $e = \sqrt{ e_x^2 + e_y^2}$ and $P = \h^2/\mu$. Thus, let us define the set
$$\mathcal{Z} = \big\{\z\in \mathbb{R}^5\times\mathbb{S}: P > 0\ \text{and}\ 0 \leq e_x^2 + e_y^2 < 1\big\},$$
then the transformation $(\r,\v): \mathcal{Z}\rightarrow \mathcal{P},\ \z\mapsto (\r(\z),\v(\z))$ is a covering map. Hence $\mathcal{P}$ is arc-connected if
$\mathcal{Z}$ is. 
is sufficient 
\end{definition}


\begin{thebibliography}{9}


  

\bibitem{Baldwin:12} Baldwin, M.~C.; Lu, P.
Optimal Deorbit Guidance.
\emph{J.\ Guidance Contr.\ Dyn.}\ \textbf{35} (2012), 93--103.

\bibitem{Bismut:11} Bismut, J.-M.
{\em Hypoelliptic Laplacian and orbital integrals.}
Princeton University Press, 2011.

\bibitem{Brand:73} Brand, T.~J.; Brown, D.~W.; Higgins, J.~P.
Space shuttle GNC equation document No.~124. Unified Powered Flight Guidance.
\emph{NASA Rept.} (1973), no.~9-10268.

\bibitem{Bonnard:06} Bonnard, B.; Caillau, J.-B.; Dujol, R.
Averaging and optimal control of elliptic Keplerian orbits with low propulsion.
{\em Systems Control Lett.}\ \textbf{55} (2006), no.~9, 755--760.

\bibitem{Bonnard:05} Bonnard, B.; Caillau, J.-B.; Tr\'elat, E.
Geometric optimal control of elliptic Keplerian orbits.
{\em Discrete Contin.\ Dyn.\ Syst.\ Ser.\ B} \textbf{5} (2005), no.~4, 929--956.

\bibitem{Curtis:05} Curtis, H.~D.
{\em Orbital mechanics for engineering students}.
Elsevier, 2005.

\bibitem{Caillau:01} Caillau, J.-B.; Noailles, J.
Coplanar control of a satellite around the Earth.
{\em ESAIM Control Optim.\ and Calc.\ Var.}\ \textbf{6} (2001), 239--258.

\bibitem{CB} Cushman, R. H.; Bates, L. M.
{\em Global aspects of integrable systems.}
Birkh\"auser, 1997.

\bibitem{Jean:14} Jean, F.
{\em Control of nonholonomic systems: From sub-Riemannian geometry to motion planning}.
Springer, 2014.

\bibitem{Joosten:85} Joosten, B.~K.
Descent guidance and mission planning for space shuttle.
\emph{Space Shuttle Technology Conference} (1985), 113--124.
  
\bibitem{Jurdjevic:97} Jurdjevic, V.
{\em Geometric Control Theory.}
Cambridge University Press, 1997.

\bibitem{Sontag:98} Sontag, E.~D.
{\em Mathematical control theory.}
Springer, 1998.

\bibitem{Sussmann:98} Sussmann, H.~J.
Geometry and optimal control in mathematical control theory.
\emph{Dedicated to Roger W.\ Brockett on his 60th birthday}, edited by J.\ Baillieul
and J.~C.\ Willems. Springer, 1998.

\bibitem{Sussmann:00} Sussmann, H.~J.
R\'esultats r\'ecents sur les courbes optimales in Quelques aspects de la th\'eorie du
contr\^ole.
\emph{Journ\'ee Annuelle de la Soci\'et\'e Math\'ematique de France}, 2000.

\bibitem{Zarrouati:87} Zarrouati, O.
{\em Trajectories Spatiales.}
CNES Cepadues, 1987.

\bibitem{Caillau:15} Caillau, J.-B., Chen, Z., and Chitour, Y.,
{\em L$^1$-Minimization for Mechanical Systems.} Preprint, 2015.

\end{thebibliography}
\end{document}